\def\rint{{\rm int\,}}
\def\rcl{{\rm cl\,}}
\def\9{\"u9}
\newtheoremstyle{custom}{3pt}{3pt}{}{}{\bfseries}{:}{.5em}{}
\theoremstyle{custom}
\newtheorem{theorem}{Theorem}[section]
\newtheorem{definition}[theorem]{Definition}
\newtheorem{remark}[theorem]{Remark}
\newtheorem{proposition}[theorem]{Proposition}
\newtheorem{example}[theorem]{Example}
\numberwithin{equation}{section}
\numberwithin{figure}{section}
\numberwithin{table}{section}
\newenvironment{keywords}{{\bf Keywords:}}{}
\def\R{\mathbb{R}}
\def\Rp{\R_{\geq0}}
\def\N{\mathbb{N}}
\def\X{\mathbb{X}}
\def\Y{\mathbb{Y}}
\def\U{\mathbb{U}}
\def\NN{{\cal N}}
\def\eps{\varepsilon}
\newcommand{\norm}[1]{\left\lVert#1\right\rVert}
\def\beq{\begin{equation}}
	\def\eeq{\end{equation}}
\def\bea{\begin{eqnarray}}
	\def\eea{\end{eqnarray}}
\def\beaa{\begin{eqnarray*}}
	\def\eeaa{\end{eqnarray*}}
\def\ub{\mathbf{u}}
\def\tell{\tilde\ell}
\def\xk{(x_{\ub}(k,x_0),u(k))}
\newcommand{\equ}{(x^{e},u^{e})}
\newcommand{\equi}{(x_i^{e},u_i^{e})}
\newcommand{\eq}{x^{e}}
\newcommand{\equm}{(x^{e}_\mu,u^{e}_\mu)}
\newcommand{\eqm}{x^{e}_\mu}
\def\mus{\mu^\star}
\def\LL{{\cal L}}
\def\KK{{\cal K}}
\def\BB{{\cal B}}
\definecolor{darkgreen}{rgb}{0,0.55,0}
\begin{document}

	\title{Multiobjective strict dissipativity via a weighted sum approach}
		\author[1]{Lars Gr\"une}
		\author[1]{Lisa Kr\"ugel}
		\author[2]{Matthias A. M\"uller}
		
		\affil[1]{Universit\"at Bayreuth, Mathematical Institute, Chair of Applied Mathematics, Germany}
		\affil[2]{Leibniz Universit\"at Hannover, Institute of Automatic Control, Hannover, Germany}

		\date{\today}

\maketitle
\renewcommand{\thefootnote}{\arabic{footnote}}
\pagestyle{myheadings}
\thispagestyle{plain}
\markboth{LARS GR\"UNE, LISA KR\"UGEL, MATTHIAS A. M\"ULLER}{MULTIOBJECTIVE STRICT DISSIPATIVITY}

\begin{abstract}
			We consider nonlinear model predictive control (MPC) with multiple competing cost functions. This leads to the formulation of multiobjective optimal control problems (MO OCPs). Since the design of MPC algorithms for directly solving multiobjective problems is rather complicated, particularly if terminal conditions shall be avoided, we use an indirect approach via a weighted sum formulation for solving these MO OCPs. This way, for each set of weights we obtain an optimal control problem with a  single objective. In economic MPC it is known that strict dissipativity is the key assumption for concluding performance and stability results. We thus investigate under which conditions a convex combination of strictly dissipative stage costs again yields a stage cost for which the system is strictly dissipative. We first give conditions for problems with linear dynamics and then move on to consider fully nonlinear optimal control problems. We derive both necessary and sufficient conditions on the individual cost functions and on the weights to conclude strict dissipativity and illustrate our findings with numerical examples.
\end{abstract}
	
	\begin{keywords}
		Strict Dissipativity, Multiobjective Optimal Control, Model Predictive Control, Weighted Sum Approach
	\end{keywords}

	\section{Introduction}
	%
	
	Model predictive control (MPC) is a common and popular method to solve optimal control problems on finite and infinite horizons. In MPC, at each sampling instant a finite-horizon optimal control problem is solved and the first part of the optimal input sequence is applied to the system, before the procedure is repeated at the next sampling instant. Using such a receding horizon approach, one can generate closed-loop trajectories that are approximately optimal on infinite time horizons, see, e.g., \cite{GruP17,FaGM18}. Together with the fact that hard input and state constraints can be directly specified in the controller design, this makes MPC a very attractive control method that has found widespread application in many industrial fields, see, e.g., \cite{Rakovic2019} and the references therein. While in classical stabilizing MPC, cost functions are used that are positive definite with respect to some desired setpoint to be stabilized (or, more general, trajectory to be tracked), more recent economic MPC approaches consider more general control objectives that might be related to some economic performance criteria and that do not need to satisfy such a definiteness condition \cite{GruP17,FaGM18}. 
	
	Most of the existing MPC literature considers the case where one given cost function shall be minimised. However, in many practical applications as in \cite{Kajgaard2013, Logist2010, Schmitt2020, Sauerteig2019,Flasskamp2020}, it is desirable to consider not only a single but several cost criteria. This leads to the formulation of a multiobjective optimal control problem (MO OCP). In recent research, different approaches to solve MO OCPs with MPC algorithms were given, see \cite{Gruene2019, Zavala2015,Zavala2012, Stieler2018}. They all have in common that it is not easy to find an appropriate way to formulate a multiobjective MPC (MO MPC) scheme for MO OCPs without terminal conditions, see for instance \cite[Chapter 5]{Stieler2018}. A possibility to avoid these problems is to use the weighted sum approach, see \cite{Ehrgott2005}, as done for example in \cite{Sauerteig2019}. In this approach, the individual cost functions $\ell_1,\ldots,\ell_m$ are combined into one function via the sum 
	\begin{equation} \ell = \sum_{i=1}^m \mu_i \ell_i\label{eq:convcost}\end{equation}
	with weights $\mu_i\in[0,1]$ and $\sum_i \mu_i =1$. Thus, we can use the theory and properties of MPC based on single criterion optimal control problems without terminal conditions, see \cite{GruP17,Gruene2013}. 
	
	In economic MPC, it is known that strict dissipativity is the key ingredient for asymptotic stability as well as for averaged and transient (approximate) optimality of the closed loop, see, e.g., \cite{FaGM18} or  \cite[Section 7]{GruP17}.
	Motivated by MO MPC applications, the theoretical question we thus study in this paper is: if the system is strictly dissipative for the $m\ge 2$ stage costs $\ell_i$, $i=1,2\ldots,m$, is it also strictly dissipative for the convex weighted sum 
	\[\ell_\mu = \sum_{i=1}^{m} \mu_i\ell_i, \quad \mbox{ where } \mu_i\in [0,1] \mbox{ and } \sum_{i=1}^{m} \mu_i = 1\]
	of these costs?	
	
	This question is interesting because optimal control and MPC with the cost $\ell_\mu$ and varying $\mu$ yields a possibility to compute efficient solutions (also known as Pareto-optimal solutions) to the multiobjective optimal control problem. While the weighted-sum approach does not parametrise all efficient solutions, it parametrises many of them and yields a particularly simple approach to multiobjective optimisation, see for instance \cite[Chapter 3]{Ehrgott2005}. We illustrate this well-known fact in Example \ref{ex:linearfail}, below.
	
	Summarizing, our findings provide a basis for a better understanding of the behaviour of economic MO OCPs. With the assumptions derived in this paper we can conclude strict dissipativity for convexly combined stage costs, which in turn allows to conclude approximate optimality of the corresponding MO MPC scheme. In order to keep the exposition focused and self-contained, we refrain from discussing concrete MPC schemes or performing MPC simulations in this paper. For these we refer to, e.g., \cite{FaGM18} and the references therein.
	
	In order to render our presentation in this paper less technical, we have decided to write all results and proofs for only two cost functions $\ell_1$ and $\ell_2$, in which case the weighted sum specializes to
	\[ \ell = \mu\ell_1 + (1-\mu)\ell_2, \quad \mu\in[0,1]. \]
	After each main result we provide a remark that explains how the assumptions and assertions extend to the general setting \eqref{eq:convcost}. 
	
	The paper is organised as follows: In Section \ref{sec:setting} we introduce the problem class that we are considering along with basic definitions of dissipativity and convexity as well as properties of the Karush-Kuhn-Tucker (KKT) and Lagrange multiplier theory. In Section \ref{sec: linear} we start our analysis with linear system dynamics and shed light on the question of when the convex combination of stage costs for which the system is strictly dissipative yields a stage cost for which the system is again strictly dissipative. We move on, in Section \ref{sec:nonlin}, giving a necessary condition for the continuous dependence of the optimal equilibrium on $\mu$ in case strict dissipativity holds and a sufficient condition under which strict dissipativity is preserved for small changes in $\mu$. Further, we provide two sufficient conditions which ensure local and global strict dissipativity for all $\mu\in[0,1]$. We illustrate our theoretical findings by numerical examples in each section. Section \ref{sec:con} concludes this paper.
	
	\section*{Notation}
	In the following we denote the natural numbers, the real numbers, and the non-negative real numbers by $\N$, $\R$, and $\R_{\geq0}$, respectively. The symbol $\R^n$ with $n\in\N$ denotes the $n$-dimensional Euclidean space. Moreover, with $\BB_\eps(x_0)\subseteq \R^n$ we denote the open ball with radius $\eps>0$ around $x_0$. With $\rint \Y$ and $\rcl \Y$ we denote the interior and the closure of a set $\Y\subset\R^n$, respectively. Further, $\norm{\cdot}$ is an arbitrary norm in $\R^n$ and $\norm{(\cdot,\cdot)}$ denotes a norm where two vectors are composed.

	\section{Setting and preliminaries}\label{sec:setting}
	We consider discrete time nonlinear systems of the form
	\begin{equation}\label{eq: system}
		x(k+1) = f(x(k),u(k)),\quad x(0)=x_0
	\end{equation}
	with $f: \R^n\times \R^m\to \R^n$ continuous. We denote the solution of system \eqref{eq: system} for a control sequence $\ub = (u(0),\dots, u(N-1))\in (\R^m)^{N}$ and initial value $x_0\in\R^n$ by $x_{\ub}(\cdot, x_0)$, or briefly by $x(\cdot)$ if there is no ambiguity about the respective control sequence and the initial value. Conceptually, the analysis in this paper should also be feasible in continuous time as in  \cite{Faulwasser2017}, however, additional regularity assumptions such as smoothness of the storage function may be required, which we avoid in the discrete-time setting of this paper.
	
	We impose a non-empty combined state and input constraint set $\Y\subseteq \R^n\times\R^m$ and we define the induced state and input constraint sets $\X := \{x\in \R^n\,|\,\text{ there is } u\in \R^m \text{ with } (x,u)\in \Y\}$ and $\U := \{u\in \R^m\,|\,\text{ there is } x\in \R^n \text{ with } (x,u)\in \Y\}$. 
	For some results we need that $\Y$ is of the form
	\beq \Y = \{(x,u)\in\R^n\times\R^m\,|\, g(x,u) \le 0\} \label{eq:Yg}\eeq
	for a function $g:\R^n\times\R^m\to\R^p$, where ``$\le$'' in $\R^p$ is understood componentwise.
	
	For an initial value $x_0\in\X$, the set of admissible control sequences up to time $N\in\N$ is defined by 
	\begin{align*}\U^N(x_0):= \{ \ub\in \U^{N}\,|\, (x_{\ub}(k, x_0),\ub(k))\in\Y\; \\
			\forall\; k=0,\ldots,N-1, \, x_{\ub}(N, x_0)\in\X\}.\end{align*}
	Given a stage cost function $\ell:\Y\to\R$, the cost functional for MPC-horizon $N\in\N_{\ge2}$ is given by
	\[J^N(x_0,\ub)=\sum_{k=0}^{N-1}\ell\xk.\]
	With this functional, following the standard notation, see \cite{GruP17}, we can formulate the optimal control problem 
	\begin{equation}\label{eq:ocp}
		\begin{split}
			\min_{\ub\in\U^N(x_0)}J^N(x_0,\ub)&=\sum_{k=0}^{N-1}\ell\xk\\
			\mbox{s.t.} \;\; x(k+1)&=f(x(k),u(k)),\quad	k=0,\dots,N-1\\
			x(0)&=x_0,
		\end{split}
	\end{equation}
	where we minimize the cost functional over all trajectories of the system starting in $x_0$.
	
	Further, a pair $\equ\in\Y$ is called an equilibrium if $\eq=f\equ$ holds. Note that all equilibria we consider in this paper are assumed to be admissible, i.e., to lie in $\Y$. We say that an equilibrium $\equ$ is a strictly globally optimal equilibrium if 
	\[ \ell\equ< \ell (x,u) \]
	holds for all equilibria $(x,u)\in \Y$ with $x\ne \eq$. 
	
	In the following we will make use of comparison-functions defined by
\begin{align*}
	\KK :=\{\alpha:\Rp\to\Rp&\mid \alpha \text{ is continuous and}\\
	&\quad\text{strictly increasing with }\alpha(0)=0\},\\
	\KK_\infty :=\{\alpha:\Rp\to\Rp&\mid\alpha\in\mathcal{K},\  \alpha \text{ is unbounded}\},\\
	\LL:=\{\delta:\Rp\to\Rp&\mid\delta\text{ is continuous and}\\
	&\quad\text{strictly decreasing with} \lim_{t\to\infty}\delta(t)=0\}.
\end{align*}
	
	In this paper we focus on the question if and when the weighted sum of strictly dissipative stage costs is again strictly dissipative. To this end, we recall the definition of strict dissipativity, see for instance \cite{GruP17}. In addition, we make use of the notion of strict pre-dissipativity, see \cite{GruG18}, and of the notion of strict $(x,u)$-dissipativity.
	\begin{definition}[Strict (pre-)dissipativity]\label{def: strDiss}
		(i) System \eqref{eq: system} is called \emph{strictly pre-dissipative} w.r.t. the supply rate $s(x,u)$
		at an equilibrium $\equ$ if there exists a storage function $\lambda:\X \to\R$, bounded on bounded subsets of $\X$, and a function $\alpha\in\KK_\infty$, such that for all $(x,u)\in\Y$ with $f(x,u)\in \X$ the inequality
	\begin{equation}\label{eq:diss}
				s(x,u)+\lambda(x)-\lambda(f(x,u))\geq \alpha(\norm{x-\eq})
	\end{equation}
		holds. 
		
		(ii) The system 
		 is called \emph{strictly dissipative} if it is strictly pre-dissipative and the storage function $\lambda:\X\to\R$ is bounded from below on $\X$. 
		
		(iii) The system
		is called \emph{strictly $(x,u)$-dissipative} if the same holds with the inequality
			\begin{align*}
				s(x,u)+\lambda(x)-\lambda(f(x,u))\geq \alpha(\norm{x-\eq,u-u^{e}}).
		\end{align*}
		
		(iv) The system 
		is called \emph{locally strictly (pre/$(x,u)$)-dissipative}, if there exists a neighbourhood $\NN$ of $(x^e,u^e)$ such that (i), (ii), or (iii), respectively, hold with $\Y=\rcl \NN$.
	
	(v) If the supply rate is of the form $s(x,u)=\ell(x,u)-\ell\equ$, with stage cost $\ell$ from the optimal control problem \eqref{eq:ocp}, then we say that the system \eqref{eq: system} is \emph{strictly dissipative (pre-dissipative, \ldots) for the stage cost $\ell$ and} call
		\begin{equation}
			\tell(x,u):=\ell(x,u)-\ell\equ +\lambda(x)-\lambda(f(x,u))
		\end{equation}
	the \emph{rotated stage cost}.
\end{definition}

	\begin{remark}
		The notion of strict pre-dissipativity we used above is quite similar to the notion of cyclo-dissipativity, see for instance \cite{Hill1980, Schaft2021}. Yet, there are also certain differences. For instance, cyclo-dissipativity allows that the storage function is unbounded on bounded sets. Further, cyclo-dissipativity requires controllability and detectability to ensure the existence of a storage function in each $x$. Since we assume the existence of such a storage function in the following, pre-dissipativity is the more suitable property for us to work with in this paper.
	\end{remark}

	The following standard definition will be used in various contexts in this paper.
	
	\begin{definition} \label{def:convex} (i) A set $D\subset\R^n$ is called {\em convex}, if for all $x_1,x_2\in D$ and $\mu\in[0,1]$ the relation $\mu x_1 + (1-\mu)x_2\in D$ holds.
		
		(ii) A scalar valued function $h:D\to \R$, $D\subset\R^n$ convex, is called {\em convex} if 
		\[ h(\mu x_1 + (1-\mu)x_2) \le \mu h(x_1) + (1-\mu)h(x_2) \]
		holds for all $\mu\in[0,1]$ and {\em strictly convex} if this inequality is strict for all $\mu\in(0,1)$. A vector valued function $h=(h_1,\ldots,h_p)^T:D\to \R^p$ is called (strictly) convex if all its component functions $h_i: D\to\R$, $i=1,\ldots,p$, are (strictly) convex.
		
		(iii) For functions $h_i:D\to\R$, $i=1, \ldots, m$, we call 
		\[ \sum_{i=1}^m \mu_i h_i \quad \mbox{for } \mu_i\in[0,1], \; \sum_{i=1}^m \mu_i=1\]
		their {\em convex combination}. For two functions $h_1,h_2:D\to\R$, this simplifies to
		\[ \mu h_1 + (1-\mu)h_2 \quad \mbox{for } \; \mu\in[0,1].\]
	\end{definition}
	It is easily seen that whenever $\ell$ is strictly convex, its minimiser is an equilibrium, and either $\Y$ is bounded or $\ell$ grows unboundedly for $\|x\|\to\infty$, then we can conclude that strict dissipativity holds even with $\lambda\equiv 0$.
	
	We now provide a couple of preliminary results on strict (pre-) dissipativity, which will be needed in the remainder of the paper. It is immediate from the definition of strict (pre-)dissipativity that $(x^e,u^e)$ is a globally optimal equilibrium w.r.t.\ the stage cost, i.e., it satisfies $\ell(x^e,u^e)\le\ell(\hat x, \hat u)$ for all other equilibria $(\hat x, \hat u)$. This means that $(x^e,u^e)$ is a minimiser of the optimisation problem
	
	\beq \label{eq:opteq}
	\begin{array}{l}
		\displaystyle \min_{(x,u)\in \Y} \ell(x, u)\\[2ex]
		\displaystyle \mbox{s.t. } \, x - f(x,u) = 0.
	\end{array}
	\eeq
	If the minimiser $(x^e,u^e)$ lies in the interior $\rint\Y$ of $\Y$, then it is also a local minimum of the problem
	\beq \label{eq:opteqmin}
	\begin{array}{l}
		\displaystyle \min_{(x,u)\in \R^n\times\R^m} \ell(x, u)\\[2ex]
		\displaystyle \mbox{s.t. } \, x - f(x,u) = 0.
	\end{array}
	\eeq
	If $\ell$ and $f$ are $C^1$, then this implies that there exists a Lagrange multiplier $\nu^e\in\R^n$ such that $(x^e,u^e)$ and $\nu^e$ satisfy the necessary optimality or KKT conditions
	\beq \label{eq:necopt}
	\begin{split}
	\dfrac{\partial L}{\partial x}(x^e,u^e,\nu^e) &= 0, \quad
	\dfrac{\partial L}{\partial u}(x^e,u^e,\nu^e) = 0, \\
	\dfrac{\partial L}{\partial \nu}(x^e,u^e,\nu^e) &= 0
	\end{split}
	\eeq
	for the Lagrange function 
	\beq \label{eq:Lagrange}
	L(x,u,\nu)=\ell(x,u)+\nu^T(x-f(x,u)).\eeq

	The next two results apply to optimal control problems with linear dynamics
	\beq 
	x^+ = Ax + Bu\label{eq:lindyn}
	\eeq
	with $A\in\R^{n\times n}$ and $B\in\R^{n\times m}$. As already stated after Definition \ref{def:convex}, strict dissipativity holds for convex stage costs if the minimiser is an equilibrium. The following two results generalise this fact. The first shows that under appropriate technical conditions the same is true also if the minimiser of $\ell$ is not necessarily an equilibrium.

	\begin{proposition} \label{prop:convl} Consider the optimal control problem \eqref{eq:ocp} with linear dynamics \eqref{eq:lindyn}, strictly convex stage cost $\ell$, and constraint set $\Y$ defined via \eqref{eq:Yg} with a convex function $g$. Assume that problem \eqref{eq:opteq} has a global minimum $(x^e,u^e)$ and satisfies the following Slater condition: There exists a pair $(\hat x, \hat u)\in\R^n\times\R^m$ with
		\[ g (\hat x, \hat u) < 0 \;\; \mbox{ and } \;\; \hat x - A \hat x - B \hat u = 0.\]
		Then, there exists a vector $\nu\in \R^n$ such that the system is strictly pre-dissipative for the stage cost $\ell$ and $\lambda(x) = \nu^T x$.
	\end{proposition}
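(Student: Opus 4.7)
The plan is to apply the KKT theorem to problem \eqref{eq:opteq} and then exploit strict convexity of the associated Lagrangian. Since $\ell$ and $g$ are convex and the equality constraint $x-Ax-Bu=0$ is linear, the prescribed Slater point $(\hat x,\hat u)$ constitutes a valid constraint qualification. Hence there exist a multiplier $\nu\in\R^n$ for the equality and a nonnegative multiplier $\eta\in\R^p$ for the inequality $g\leq 0$ such that the Lagrangian
\[L(x,u,\nu,\eta)=\ell(x,u)+\nu^T(x-Ax-Bu)+\eta^Tg(x,u)\]
satisfies $\nabla_{(x,u)}L(x^e,u^e,\nu,\eta)=0$ and complementarity $\eta^Tg\equ=0$.

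Next, I would observe that $L(\cdot,\cdot,\nu,\eta)$ is strictly convex in $(x,u)$: it is the sum of the strictly convex $\ell$, the affine function $\nu^T(x-Ax-Bu)$, and the convex $\eta^Tg$ (convex since $\eta\geq 0$). Combined with the vanishing gradient at $\equ$, strict convexity makes $\equ$ the unique global minimizer of $L(\cdot,\cdot,\nu,\eta)$ on $\R^n\times\R^m$, with minimum value $\ell\equ$ by complementarity. Choosing the candidate storage function $\lambda(x)=\nu^Tx$, the rotated stage cost reads
\[\tell(x,u)=\ell(x,u)-\ell\equ+\nu^T(x-Ax-Bu),\]
so the minimality statement rewrites as $\tell(x,u)+\eta^Tg(x,u)\geq 0$ on $\R^n\times\R^m$, with equality only at $\equ$. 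Restricting to $\Y=\{g\leq 0\}$ and using $\eta\geq 0$ gives the pointwise bound $\tell(x,u)\geq -\eta^Tg(x,u)\geq 0$ throughout $\Y$.

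The main obstacle is upgrading this nonnegativity to the $\KK_\infty$-lower bound $\alpha(\|x-\eq\|)$ demanded by Definition~\ref{def: strDiss}. The key is that any strictly convex function on $\R^d$ attaining its minimum is automatically coercive: were the sublevel set $\{h\leq M\}$ of $h(x,u):=\tell(x,u)+\eta^Tg(x,u)$ unbounded, then picking a sequence $z_n$ there with $r_n:=\|z_n-\equ\|\to\infty$ and extracting, by compactness of the unit sphere, a limiting direction $v$ of $v_n:=(z_n-\equ)/r_n$ would yield, for every fixed $t>0$,
\[h(\equ+tv_n)\leq(t/r_n)h(z_n)+(1-t/r_n)h\equ\leq tM/r_n,\]
which tends to $0$ and forces $h(\equ+tv)\leq 0$ in the limit, contradicting strict convexity. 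Coercivity of $h$ then implies that
\[\alpha_0(r):=\inf\bigl\{h(x,u):(x,u)\in\Y,\ \|x-\eq\|\geq r\bigr\}\]
is strictly positive for every $r>0$---a minimizing sequence is either bounded and must accumulate at $\equ$ (contradicting $\|x-\eq\|\geq r$) or unbounded (contradicting coercivity)---and diverges as $r\to\infty$. Minorizing the nondecreasing $\alpha_0$ by a continuous $\KK_\infty$ function finally delivers $\tell(x,u)\geq\alpha(\|x-\eq\|)$ on $\Y$, establishing strict pre-dissipativity with $\lambda(x)=\nu^Tx$.
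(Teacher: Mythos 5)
Your proof is correct and follows essentially the same route as the argument the paper defers to (\cite[Proposition~4.3]{DGSW14}): Lagrangian duality under the Slater condition produces the multiplier $\nu$ defining the linear storage function, strict convexity of the resulting rotated cost makes $\equ$ its unique minimiser, and a coercivity/compactness argument upgrades strict positivity to the required $\KK_\infty$ bound. The only point to tidy up is that the proposition does not assume $\ell$ and $g$ to be differentiable, so the stationarity condition $\nabla_{(x,u)}L=0$ should be replaced by the saddle-point form of the KKT conditions (equivalently, one can dualise only the affine equality constraint and minimise the Lagrangian over $\Y$ directly); this delivers exactly the global minimality of the Lagrangian at $\equ$ that your argument actually uses.
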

	For a proof of this proposition see \cite[Proposition 4.3]{DGSW14}. Note that the Slater condition is satisfied with $(\hat x, \hat u)=(x^e,u^e)$ if $(x^e,u^e)\in\rint \Y$.

	The second result shows that strict dissipativity also holds if the stage cost is not itself convex, but can be appropriately bounded by a convex function.

	\begin{proposition} Consider the optimal control problem \eqref{eq:ocp} with linear dynamics \eqref{eq:lindyn} and constraint set $\Y$ defined via \eqref{eq:Yg} with a convex function $g$. Assume there is a strictly convex function $\hat \ell$ with $\hat\ell \le \ell$ and $\hat\ell\equ=\ell\equ$ for the strictly globally optimal equilibrium $\equ\in\Y$ of $\hat \ell$, and that the Slater condition from Proposition \ref{prop:convl} holds. Then the system is strictly pre-dissipative for the stage cost $\ell$ with linear storage function.
		\label{prop:convbound}\end{proposition}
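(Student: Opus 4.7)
The plan is to deduce strict pre-dissipativity of $\ell$ directly from strict pre-dissipativity of the dominating function $\hat\ell$, by simply transferring the storage function produced by Proposition \ref{prop:convl}.

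First I would verify that $\hat\ell$ satisfies all hypotheses of Proposition \ref{prop:convl}: the dynamics are linear by assumption, $\hat\ell$ is strictly convex by hypothesis, $\Y$ is defined via a convex $g$, the associated equilibrium problem \eqref{eq:opteq} for $\hat\ell$ has a global minimiser at $\equ$ (this is exactly the assumption that $\equ$ is the strictly globally optimal equilibrium of $\hat\ell$), and the Slater condition holds. Applying Proposition \ref{prop:convl} to $\hat\ell$ therefore yields $\nu\in\R^n$ and $\alpha\in\KK_\infty$ with
\[
\hat\ell(x,u)-\hat\ell\equ+\nu^T x-\nu^T(Ax+Bu)\;\ge\;\alpha(\|x-x^e\|)
\]
for all $(x,u)\in\Y$ with $Ax+Bu\in\X$.

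The second step exploits the pointwise bound $\hat\ell\le\ell$ together with the equality $\hat\ell\equ=\ell\equ$: subtracting gives $\ell(x,u)-\ell\equ\ge\hat\ell(x,u)-\hat\ell\equ$ on all of $\Y$. Adding the linear storage term $\lambda(x)=\nu^T x$ on both sides yields
\[
\ell(x,u)-\ell\equ+\lambda(x)-\lambda(f(x,u))\;\ge\;\alpha(\|x-x^e\|),
\]
which is exactly the strict pre-dissipativity inequality \eqref{eq:diss} at $\equ$ with supply rate $s(x,u)=\ell(x,u)-\ell\equ$ and linear storage function $\lambda$.

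There is essentially no real obstacle here: the argument is a one-line reduction once Proposition \ref{prop:convl} is applied to the majorant. The only point that deserves a brief sentence of justification is why the equilibrium point at which we obtain dissipativity is indeed admissible for the statement, i.e.\ that $\equ$ is the correct reference equilibrium in Definition \ref{def: strDiss}(i); this is built into the hypothesis that $\equ$ is a strictly globally optimal equilibrium of $\hat\ell$, and strict pre-dissipativity is a statement tied to this equilibrium regardless of whether $\ell$ itself attains its equilibrium minimum there.
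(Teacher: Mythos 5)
Your proposal is correct and is essentially identical to the paper's own proof: apply Proposition \ref{prop:convl} to the strictly convex minorant $\hat\ell$ to obtain a linear storage function, then use $\hat\ell\le\ell$ together with $\hat\ell\equ=\ell\equ$ to transfer the dissipation inequality to $\ell$. Your closing remark about the reference equilibrium being that of $\hat\ell$ is a sensible clarification but adds nothing beyond what the paper's argument already implicitly uses.
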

	\begin{proof} From Proposition \ref{prop:convl} it follows that there exists a linear, hence continuous storage function $\lambda$ for the stage cost $\hat \ell$. For this storage function we thus obtain
		\begin{align*}
		\lambda&(f(x,u)) \\
		& \le  \lambda(x) + \hat \ell(x,u) - \hat \ell\equ - \alpha(\|x-\eq\|) \\
		& \le  \lambda(x) + \ell(x,u) - \ell\equ - \alpha(\|x-\eq\|)
		\end{align*}
		for all $(x,u)\in\Y$. This proves strict pre-dissipativity for the stage cost $\ell$.
	\end{proof}
	
	We note that both Proposition \ref{prop:convl} and Proposition \ref{prop:convbound} yield strict dissipativity if $\Y$ is compact, since then $\X$ is compact, too, and the linear storage function is bounded from below on $\X$.
	
	The final result of this section shows that if $(x^e,u^e)\in\rint\Y$, then the linear part of the storage function, i.e., its gradient, always coincides with the Lagrange multiplier $\nu$ from the necessary optimality conditions \eqref{eq:necopt}. Here no linearity assumption on $f$ is needed.
	
	\begin{proposition} \label{prop:lambdaLagrange}Consider the optimal control problem \eqref{eq:ocp} and assume that the system
		is strictly dissipative for the stage cost at an equilibrium $(x^e,u^e)\in\rint\Y$. Assume that $f$, $\ell$ and $\lambda$ are $C^1$. Then there exists a Lagrange multiplier $\nu^e$ satisfying \eqref{eq:necopt} such that the identity
		\[ \nabla_x \lambda(x^e) = \nu^e \]
		holds.
	\end{proposition}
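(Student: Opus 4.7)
The plan is to exploit the fact that strict dissipativity makes $(x^e,u^e)$ an unconstrained local minimiser of the rotated stage cost $\tilde\ell(x,u) = \ell(x,u) - \ell(x^e,u^e) + \lambda(x) - \lambda(f(x,u))$, and to show that the first-order necessary conditions at this minimiser are nothing but the KKT system \eqref{eq:necopt} with $\nu^e$ chosen as $\nabla_x\lambda(x^e)$.

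First, I would observe that the dissipation inequality \eqref{eq:diss} gives $\tilde\ell(x,u)\ge\alpha(\|x-x^e\|)\ge 0$ for every $(x,u)\in\Y$, while at the equilibrium $f(x^e,u^e)=x^e$ forces $\tilde\ell(x^e,u^e)=0$. Since $(x^e,u^e)\in\rint\Y$, the pair $(x^e,u^e)$ is a local minimiser of $\tilde\ell$ on an open neighbourhood in $\R^n\times\R^m$, and the $C^1$ assumption on $\ell$, $f$ and $\lambda$ allows us to apply the Fermat condition: $\nabla_x\tilde\ell(x^e,u^e)=0$ and $\nabla_u\tilde\ell(x^e,u^e)=0$.

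Next, I would compute these gradients explicitly via the chain rule, obtaining
\begin{align*}
\nabla_x\tilde\ell(x^e,u^e) &= \nabla_x\ell(x^e,u^e) + \nabla_x\lambda(x^e) - \left(\tfrac{\partial f}{\partial x}(x^e,u^e)\right)^{\!T}\!\nabla\lambda(f(x^e,u^e)), \\
\nabla_u\tilde\ell(x^e,u^e) &= \nabla_u\ell(x^e,u^e) - \left(\tfrac{\partial f}{\partial u}(x^e,u^e)\right)^{\!T}\!\nabla\lambda(f(x^e,u^e)),
\end{align*}
where I used that $\lambda$ does not depend on $u$. Using $f(x^e,u^e)=x^e$, both terms $\nabla\lambda(f(x^e,u^e))$ collapse to $\nabla_x\lambda(x^e)$. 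Setting $\nu^e := \nabla_x\lambda(x^e)$ and comparing with the partial derivatives of the Lagrangian $L(x,u,\nu)=\ell(x,u)+\nu^T(x-f(x,u))$ in \eqref{eq:Lagrange}, i.e.,
\[ \tfrac{\partial L}{\partial x} = \nabla_x\ell + \nu - \left(\tfrac{\partial f}{\partial x}\right)^{\!T}\!\nu, \qquad \tfrac{\partial L}{\partial u} = \nabla_u\ell - \left(\tfrac{\partial f}{\partial u}\right)^{\!T}\!\nu, \]
I see that the Fermat conditions for $\tilde\ell$ are exactly $\partial L/\partial x=0$ and $\partial L/\partial u=0$ at $(x^e,u^e,\nu^e)$. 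The third KKT equation $\partial L/\partial\nu = x^e-f(x^e,u^e)=0$ holds by definition of the equilibrium.

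I do not anticipate a hard step here: the whole argument is a one-line consequence of the observation that strict dissipativity turns $(x^e,u^e)$ into an interior unconstrained minimiser of the rotated cost. The only point requiring a little care is that $\tilde\ell$ is defined only on $\Y$, but since $(x^e,u^e)\in\rint\Y$ one has an open ball on which $\tilde\ell$ attains its minimum at the equilibrium, so the first-order conditions apply without any constraint multipliers for $g$; the multiplier $\nu^e$ arises solely from the equality constraint $x-f(x,u)=0$, and the identification $\nu^e=\nabla_x\lambda(x^e)$ is then read off directly.
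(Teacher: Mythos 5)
Your argument is correct: strict dissipativity with $(x^e,u^e)\in\rint\Y$ makes the rotated cost $\tilde\ell$ attain an interior local minimum of value $0$ at the equilibrium, and the Fermat conditions there, after using $f(x^e,u^e)=x^e$, are exactly the KKT system \eqref{eq:necopt} with $\nu^e=\nabla_x\lambda(x^e)$. The paper does not reproduce a proof but defers to the cited references, whose argument is precisely this one, so your proposal matches the intended proof.
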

	
	This result has first been used in the proof of \cite[Theorem 5]{Muller_et_al_TAC15} and has also been proven in \cite[Theorem 3]{Faulwasser2018}.


	\section{Results for linear dynamics}\label{sec: linear}
	
	We now turn to the investigation of strict dissipativity of optimal control problems with convexly combined stage costs. We start our consideration with the case of linear dynamics \eqref{eq:lindyn}. 
	
	\subsection{Linear dynamics with quadratic costs}
	
	In the case of linear dynamics, a common choice of stage costs are quadratic costs 
	\beq
	\ell_i(x,u) = x^TQ_i x+u^TR_i u + s_i^Tx + v_i^Tu,\quad i=1,2\label{eq:quadcost}
	\eeq
	with $Q_i\in\R^{n\times n}$, $R_i\in\R^{m\times m}$, $s_i\in\R^n$ and $v_i\in\R^m$. Here we assume that $Q_i$ and $R_i$ are symmetric, $Q_i$ is positive semidefinite and $R_i$ is positive definite. These stage costs are also called generalised quadratic costs, as they also contain linear terms. In this setting, dissipativity characterisations and storage functions can be explicitly computed using the techniques from \cite{GruG18}. This leads to the following theorem.
	%
	%
	
	\begin{theorem} \label{thm:lq} Consider the optimal control problem \eqref{eq:ocp} with linear dynamics \eqref{eq:lindyn} and quadratic costs $\ell_1$ and $\ell_2$ \eqref{eq:quadcost}. We assume that the constraint set $\Y$ is either convex and compact or $\Y=\R^n\times\R^m$, i.e., there are no constraints, and that the system \eqref{eq: system} 
		is strictly dissipative for both $\ell_1$ and $\ell_2$. In case $\Y$ is compact, we assume that the global optimal equilibria $(x_\mu^e, u_\mu^e)$ minimising \eqref{eq:opteq} for $\ell = \ell_\mu := \mu \ell_1 + (1-\mu)\ell_2$ satisfy $(x_\mu^e,u_\mu^e)\in\rint\Y$ for all $\mu\in[0,1]$. 
		Then the system 
		is strictly dissipative at $\equm$ for cost function $\ell_\mu$ for all $\mu\in[0,1]$.
	\end{theorem}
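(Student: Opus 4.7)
The starting observation is that $\ell_\mu=\mu\ell_1+(1-\mu)\ell_2$ is itself a generalised quadratic cost of the form \eqref{eq:quadcost}, with data $Q_\mu=\mu Q_1+(1-\mu)Q_2$, $R_\mu=\mu R_1+(1-\mu)R_2$, $s_\mu=\mu s_1+(1-\mu)s_2$, and $v_\mu=\mu v_1+(1-\mu)v_2$. Convex combinations of symmetric positive (semi-)definite matrices preserve the corresponding definiteness, so $Q_\mu$ is positive semidefinite and $R_\mu$ is positive definite. Hence $\ell_\mu$ sits inside the same linear-quadratic framework used in \cite{GruG18} to produce the storage functions for $\ell_1,\ell_2$, and the natural ansatz is a storage function for $\ell_\mu$ of linear-quadratic form $\lambda_\mu(x)=\nu_\mu^T x + x^T P_\mu x$.

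The plan is to build $\lambda_\mu$ from the storage functions of $\ell_1,\ell_2$ in two steps. First, the linear-quadratic dissipativity results of \cite{GruG18} imply that the strict dissipativity hypothesis for $\ell_i$ can be realised by storage functions $\lambda_i(x)=\nu_i^T x+x^T P_i x$ with symmetric $P_i$; the rotated stage cost $\tilde\ell_i$ is then quadratic-plus-affine in $(x,u)$, and its Hessian $M_i$ is an explicit block matrix built from $A,B,Q_i,R_i,P_i$. Combining the dissipativity inequality $\tilde\ell_i(x,u)\ge\alpha_i(\|x-x_i^e\|)$ with the identity $\tilde\ell_i(x_i^e,u_i^e)=0$ and with Proposition \ref{prop:lambdaLagrange} at the interior equilibria translates into the matrix condition
\[
M_i\succeq 0,\qquad \ker M_i\subseteq \{z=(\xi,\eta)\in\R^{n+m}:\xi=0\}.
\]

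Second, set $P_\mu:=\mu P_1+(1-\mu)P_2$ and choose $\nu_\mu$ to be the linear part forced by the KKT conditions \eqref{eq:necopt} for the equilibrium problem \eqref{eq:opteq} associated to $\ell_\mu$ at $(x_\mu^e,u_\mu^e)$; this is legitimate because $(x_\mu^e,u_\mu^e)\in\rint\Y$ in the compact case (by assumption) and trivially in the unconstrained case, and Proposition \ref{prop:lambdaLagrange} identifies the required $\nabla_x\lambda_\mu(x_\mu^e)$ with that multiplier. With this choice the gradient of $\tilde\ell_\mu$ vanishes at $(x_\mu^e,u_\mu^e)$ by construction, so $\tilde\ell_\mu$ is a pure quadratic form in $(x-x_\mu^e,u-u_\mu^e)$, and the linearity of the rotation in $\lambda$ yields the Hessian identity $M_\mu=\mu M_1+(1-\mu)M_2$. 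Convex combinations preserve positive semidefiniteness, so $M_\mu\succeq 0$; moreover, if $M_\mu z=0$ then $\mu z^T M_1 z+(1-\mu)z^T M_2 z=0$ forces each nonnegative summand to vanish, hence $M_i z=0$ and therefore $\xi=0$. A standard argument for positive-semidefinite quadratic forms whose kernel is contained in $\{\xi=0\}$ then gives $\tilde\ell_\mu(x,u)\ge c\|x-x_\mu^e\|^2$ for some $c>0$, which establishes strict pre-dissipativity of $\ell_\mu$ at $(x_\mu^e,u_\mu^e)$ with storage function $\lambda_\mu$.

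Upgrading pre-dissipativity to strict dissipativity requires $\lambda_\mu$ to be bounded from below on $\X$; this is automatic when $\Y$, and hence $\X$, is compact, and in the unconstrained case it is inherited from the boundedness of $\lambda_1,\lambda_2$ (which forces $P_1,P_2\succeq 0$ and therefore $P_\mu\succeq 0$). The main obstacle I anticipate is the first step: rigorously extracting the linear-quadratic form of the $\lambda_i$ and the matrix-kernel characterisation of strict dissipativity from \cite{GruG18} in a form compatible with the (possibly distinct) equilibria $(x_i^e,u_i^e)$, and verifying that the convex combination of the Hessians genuinely inherits the kernel condition rather than merely positive semidefiniteness. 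Once this structural input is secured, the remainder is a linear-algebraic convexity argument combined with the KKT-based adjustment of the linear part $\nu_\mu$.
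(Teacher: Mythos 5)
Your core construction is the same as the paper's: represent the storage functions for $\ell_1,\ell_2$ in linear--quadratic form $\lambda_i(x)=x^TP_ix+p_i^Tx$ (this is exactly the structural input the paper also takes from \cite{GruG18}), take the convex combination $P_\mu=\mu P_1+(1-\mu)P_2$ of the quadratic parts, and fix the linear part via the Lagrange multiplier of \eqref{eq:opteq} at the interior equilibrium $(x^e_\mu,u^e_\mu)$. Where you differ is the verification of strict pre-dissipativity: the paper simply observes that $P_\mu$ solves the reduced $n\times n$ matrix inequality \eqref{eq:lmi} for $Q_\mu=\mu Q_1+(1-\mu)Q_2$ and re-invokes the if-and-only-if characterisation of \cite{GruG18}, whereas you work with the full $(n+m)\times(n+m)$ Hessian $M_i$ of the rotated cost and propagate ``$M_i\ge 0$ with $\ker M_i\subseteq\{\xi=0\}$'' through the convex combination. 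That argument (each nonnegative summand of $z^TM_\mu z$ must vanish, hence $M_iz=0$, hence $\xi=0$, hence a Schur-complement bound $\tilde\ell_\mu\ge c\|x-x^e_\mu\|^2$) is correct and is a legitimate, slightly more self-contained alternative to re-citing the lemma.

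There is, however, a genuine gap in your treatment of the unconstrained case $\Y=\R^n\times\R^m$. You claim that boundedness from below of $\lambda_\mu(x)=x^TP_\mu x+\nu_\mu^Tx$ ``is inherited'' because $P_1,P_2\ge 0$ forces $P_\mu\ge 0$. Positive semidefiniteness of $P_\mu$ is not enough: if $\nu_\mu$ has a component in $\ker P_\mu$, then $\lambda_\mu$ is unbounded below (take $P_\mu=0$ and $\nu_\mu\neq 0$). This matters precisely because, as the paper emphasises after the theorem and in Example~\ref{ex: linquad}, the linear part $\nu_\mu$ is \emph{not} the convex combination of $\nu_1$ and $\nu_2$, so you cannot inherit any compatibility between $\nu_\mu$ and $\ker P_\mu$ from the boundedness of $\lambda_1,\lambda_2$. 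The paper closes this by first upgrading each $P_i$ to a positive definite matrix $P_i+\eps I$ (which still satisfies \eqref{eq:lmi} for small $\eps$), so that $P_\mu>0$ and the quadratic term dominates any linear correction. Note that this perturbation is cheap in the paper's formulation because it is done at the level of the inequality \eqref{eq:lmi} and then routed back through the characterisation of \cite{GruG18}; in your full-Hessian formulation the replacement $P_i\mapsto P_i+\eps I$ does not obviously preserve $M_i\ge 0$ (the perturbation of $M_i$ is $\eps$ times an indefinite matrix, and its negative part is unbounded in the $u$-direction), so you would need an additional argument --- e.g.\ the rescaling device implicit in \cite{GruG18} --- to repair this step. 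The compact case is fine as you state it.
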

	
	\begin{proof} 
		By \cite[Lemma 4.1]{GruG18}, since $(x_\mu^e,u_\mu^e)\in\rint\Y$ for $\mu=0$ and $\mu=1$, strict pre-dissipativity for $\ell_i$, $i=1,2$ holds if and only if there is a symmetric solution $P_i$ of the matrix inequality
		\beq 
		Q_i + P_i - A^T P_i A > 0, \label{eq:lmi}
		\eeq
		with $Q_i$ from \eqref{eq:quadcost} being symmetric and positive semidefinite.

		In this case, the storage function can be chosen to be of the linear-quadratic form
		\[ \lambda_i (x) = x^TP_ix + p_i^Tx\]
		for an appropriate vector $p_i\in\R^n$ (see the discussion after this proof). In case $\Y=\R^n\times\R^m$ the matrix $P_i$ must be positive semidefinite for $\lambda_i$ to be bounded from below. Then, however, we may choose $P_i$ to be positive definite, because when a positive semidefinite matrix $P_i$ satisfies \eqref{eq:lmi}, then for sufficiently small $\eps>0$ the positive definite matrix $P_i + \eps I$ also satisfies \eqref{eq:lmi}.
		
		Thus, the assumptions of the theorem imply that we can find symmetric solutions $P_1$, $P_2$ of \eqref{eq:lmi} for $i=1,2$, respectively, which we can choose to be positive definite if $\Y=\R^n\times\R^m$. It is then easily seen that $P_\mu = \mu P_1 + (1-\mu)P_2$ solves \eqref{eq:lmi} for $Q_\mu = \mu Q_1 + (1-\mu)Q_2$. Hence, since $(x_\mu^e,u_\mu^e)\in\rint\Y$, by \cite[Lemma 4.1]{GruG18} strict pre-dissipativity for the cost $\ell_\mu$ follows.
		
		If $\Y=\R^n\times\R^m$, $P_\mu$ is positive definite and thus boundedness from below of $\lambda_\mu(x) = x^TP_\mu x + p_\mu^Tx$ holds (regardless of what $p_\mu$ is). If $\Y$ is compact, $\lambda_\mu$ is bounded from below on $\X$ by its continuity. Thus, in both cases $\lambda_\mu$ is bounded from below on $\X$, which together with strict pre-dissipativity implies strict dissipativity.
	\end{proof}
	
	\begin{remark} 
		\begin{enumerate}
			\item Theorem \ref{thm:lq} and its proof immediately generalise to $m$ cost functions $\ell_1,\ldots,\ell_m$ of the form \eqref{eq:quadcost}, since it is easy to check that $P_\mu = \sum_i\mu_i P_i$ solves \eqref{eq:lmi} for $Q_\mu = \sum_i \mu_i Q_i$ if each $P_i$ for $i=1,\ldots,m$ solves \eqref{eq:lmi} for $Q_i$.
			\item In this generalised proof of Theorem \ref{thm:lq}, the requirement $\sum_i \mu_i=1$ is not necessary. Assuming the weights to satisfy $\mu_i\geq0$ for all $i$ and $\mu_i>0$ for at least one $i$ would suffice. The same is true for the $m$-cost variants of all subsequent results in this paper. Yet, since this paper is motivated by the weighted sum approach, we formulate all our results with convex combinations, i.e. with the assumption that $\sum_i\mu_i=1$.
		\end{enumerate}
	\end{remark}
	
	The construction in the proof implies that the matrix $P_\mu$ in the quadratic storage function $\lambda_\mu$ for $\ell_\mu$ is a weighted sum $P_\mu = \mu P_1 + (1-\mu)P_2$ of the matrices $P_1$ and $P_2$ in the storage functions for $\ell_1$ and $\ell_2$. Moreover, it follows from Proposition \ref{prop:lambdaLagrange} that the vector $p_\mu$ is the Lagrange multiplier $\nu_\mu=\nu$ of the optimisation problem \eqref{eq:opteq} with $\ell = \ell_\mu$. 
	In contrast to the matrix $P_\mu$, this vector $p_\mu$ can in general {\em not} be chosen as a convex combination $p_\mu = \mu p_1 + (1-\mu)p_2$. Likewise, the optimal equilibrium $(x_\mu^e,u_\mu^e)$ is in general not a weighted sum of $(x_1^e,u_1^e)$ and $(x_2^e,u_2^e)$. %
	%
	The following one-dimensional example illustrates this fact.
	
	\begin{example}\label{ex: linquad}
		Consider the one-dimensional dynamics $x^+ = ax+bu$ with $\Y=\R\times\R$ and the costs of the form
		\[
		\ell_i(x,u) = q_ix^2+r_i u^2+s_ix+v_iu
		\]
		with $q_i,r_i,s_i,v_i>0$, $i\in\{1,2\}$, and $a,b\in\R$. Since $q_i>0$, the stage costs $\ell_i$ are both strictly convex function. Moreover, $P_i=0$ solves the one-dimensional matrix inequality~\eqref{eq:lmi} for $Q_i=q_i$ and $A=a$, thus we do not need a quadratic part in the storage function. We do, however, in general need a linear part, which we compute as described above via the Lagrange multiplier of the optimisation problem
		\begin{align*}
			& \min_{(x,u)\in\Y} \;\ell_i(x,u) = q_ix^2+r_i u^2+s_ix+v_iu\\
			& \text{s.t. } x =ax+bu.
		\end{align*}
		For this purpose we define the Lagrange function
		\[L_i(x,u,p_i)=\ell_i(x,u)+\nu_i(x-ax-bu)\]
		and calculate the corresponding derivatives
		\begin{align}
			\dfrac{\partial L_i}{\partial x}(x,u,\nu_i)&= 2q_i x+ s_i+\nu_i (1-a)  \label{eq: dx}\\
			\dfrac{\partial L_i}{\partial u}(x,u,\nu_i)&= 2r_i u + v_i - \nu_i b  \label{eq: du}\\
			\dfrac{\partial L_i}{\partial p_i}(x,u,\nu_i)&= x -ax-bu. \label{eq: dmu}
		\end{align}
		Solving the system \eqref{eq: dx}--\eqref{eq: dmu} yields
		\begin{equation}\label{equ i}
			\begin{split}
				x_i^{e}&=\dfrac{(1-a+b)b(-bs_i-(1-a)v_i)}{(1-a+b)(2q_i b^2+2(1-a)^2r_i)}=\dfrac{b(-bs_i-(1-a)v_i)}{2(q_i b^2+(1-a)^2r_i)},\\
				u_i^{e}&= \dfrac{(1-a)(-bs_i-(1-a)v_i)}{2(q_ib^2+(1-a)^2r_i)}\\
				\nu_i&=\dfrac{1}{1-a+b}\left(\dfrac{(-bs_i-(1-a)v_i)(r_i(1-a)-bq_i)}{q_ib^2+(1-a)^2r_i}+v_i-s_i\right)\\
				&=\dfrac{q_i v_i b -(1-a)r_i s_i}{q_i b^2+(1-a)^2r_i}.
			\end{split}
		\end{equation}
		According to Proposition \ref{prop:convl}, the system 
		is strictly dissipative for both scalar stage costs at $\equi$ with storage function $\lambda_i(x)=\nu_i x$.
		
		Next, we consider the combined stage cost
		\begin{equation}
			\ell_\mu(x,u)=\mu \ell_1(x,u)+(1-\mu)\ell_2(x,u),
		\end{equation}
		which is again strictly convex for all $\mu\in[0,1]$, since a convex combination of strictly convex functions is strictly convex. As above, we define the Lagrange function
		\[L(x,u,\nu_\mu)=\ell_\mu(x,u)+\nu_\mu(x-ax-bu),\]
		and solve
		\begin{alignat}{2}
			\dfrac{\partial L}{\partial x}(x,u,p_\mu)&=\mu(2q_1x+s_1)+(1-\mu)(2q_2x+s_2)+\nu_\mu(1-a)&&=0\\
			\dfrac{\partial L}{\partial u}(x,u,p_\mu)&=\mu(2r_1 u+v_1)+(1-\mu)(2r_2 u+v_2)-\nu_\mu b&&=0\\
			\dfrac{\partial L}{\partial \nu_\mu}(x,u,p_\mu)&=x-ax-bu&&=0.
		\end{alignat}
		The solution is given by
		\begin{align}
			x^{e}_\mu &=\dfrac{-b(\mu(b s_1+(1-a)v_1)+(1-\mu)(bs_2+(1-a)v_2))}{2(\mu(b^2q_1+(1-a)^2r_1)+(1-\mu)(b^2q_2+(1-a)^2r_2))}\\
			u^{e}_\mu &=\dfrac{-\left((1-a)b(\mu s_1+(1-\mu)s_2)+(1-a)^2(\mu v_1+(1-\mu)v_2)\right)}{2\left(\mu(b^2q_1+(1-a)^2r_1)+(1-\mu)(b^2q_2+(1-a)^2r_2)\right)}
		\end{align}
	\begin{footnotesize}
	\begin{align}
		\nu_\mu	&=
				 \dfrac{b(\mu q_1+(1-\mu)q_2)(\mu v_1+(1-\mu)v_2)-(1-a)(\mu r_1+(1-\mu)r_2)(\mu s_1+(1-\mu)s_2)}{b^2(\mu q_1+(1-\mu)q_2)+(1-a)^2(\mu r_1+(1-\mu)r_2)}.
		\end{align}
	\end{footnotesize}
		
		A simple numerical example using the system $x^+ = 2x+4u$ with stage costs
		\beq 
		\ell_1(x,u) = 0.1x^2+10u^2+6x+7u \quad \mbox{ and } \quad 
		\ell_2(x,u)=4x^2+3u^2+3x+8u
		\label{eq:numcost}\eeq
		shows that $\nu_\mu$ does indeed not depend linearly on $\mu$, cf.\ Figure \ref{fig:pmu1}.
		
		\begin{figure}[h]
			\begin{center}
				\includegraphics[width=0.5\textwidth]{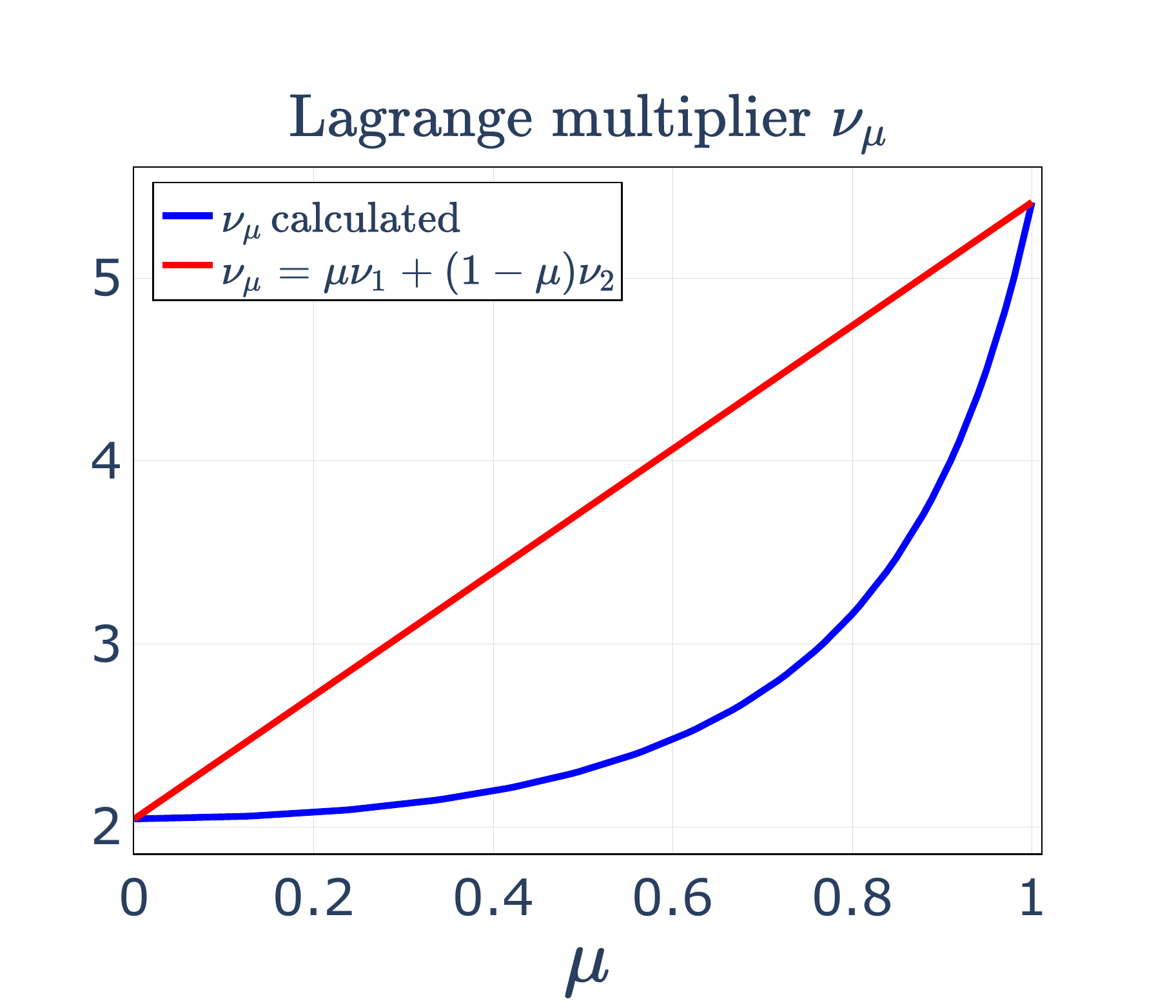}
				\caption{Lagrange multiplier $\nu_\mu$ depending on $\mu$ (blue) and convex combination $\mu \nu_1 + (1-\mu)\nu_2$ (red) for costs from \eqref{eq:numcost} \label{fig:pmu1}}
			\end{center}
		\end{figure}
		
		In order to find out whether there are cases in which $\nu_\mu$ depends linearly on $\mu$ we consider
		\begin{equation}\label{eq:pmu}
			\nu_\mu - \mu \nu_1-(1-\mu)\nu_2= 0.
		\end{equation}
		It turns out---by straightforward calculations---that sufficient conditions for this equation to hold for all $\mu\in[0,1]$ are $a=1$ or $q_1r_2 = q_2r_1$. This illustrates that only in very special cases equality~\eqref{eq:pmu} holds for arbitrary $\mu\in(0,1)$.
		\end{example}

		\subsection{Linear dynamics with non-quadratic cost}
		
		In this subsection we continue considering linear dynamics \eqref{eq:lindyn} but now with not necessarily quadratic costs. Instead, we assume that the occurring cost functions are convex or strictly convex. Then we can make use of  Proposition \ref{prop:convl} to show strict dissipativity for the combined stage costs. 
		
		\begin{theorem} \label{thm:convex} Consider the optimal control problem \eqref{eq:ocp} with linear dynamics \eqref{eq:lindyn} and convex stage costs $\ell_1$ and $\ell_2$, where at least one of the functions $\ell_1$ and $\ell_2$ is also strictly convex. Assume that the constraint set $\Y$ is convex and compact and that the Slater condition from Proposition \ref{prop:convl} is satisfied. Then the system 
			is strictly dissipative at the optimal equilibrium $\equm$ for the cost function $\ell_\mu = \mu \ell_1 + (1-\mu)\ell_2$ for all $\mu\in(0,1)$.
		\end{theorem}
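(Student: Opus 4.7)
The plan is to reduce the claim to a direct application of Proposition \ref{prop:convl}, exploiting that convexity is preserved under convex combinations and that strict convexity is inherited as long as one of the summands is strictly convex and carries positive weight. The key observation is that for $\mu\in(0,1)$ the stage cost $\ell_\mu=\mu\ell_1+(1-\mu)\ell_2$ is strictly convex: if, say, $\ell_1$ is strictly convex and $\ell_2$ is convex, then for $x_1\ne x_2$ in $\Y$ and $\theta\in(0,1)$, applying the definitions of (strict) convexity to both summands and forming the weighted sum with weights $\mu,1-\mu$ gives a strict inequality because $\mu>0$.

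Next I would verify that the auxiliary optimisation problem \eqref{eq:opteq} with $\ell=\ell_\mu$ admits a global minimum, which is needed to invoke Proposition \ref{prop:convl}. Since $\Y$ is compact and the equilibrium manifold $\{(x,u)\,:\,x-Ax-Bu=0\}$ is closed, their intersection (the feasible set of \eqref{eq:opteq}) is compact. It is non-empty because the Slater point $(\hat x,\hat u)$ provided by the hypothesis of Proposition \ref{prop:convl} already satisfies the equilibrium equation. Continuity of $\ell_\mu$ on this compact set then yields a minimiser $(x_\mu^e,u_\mu^e)$.

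With these two ingredients in place, Proposition \ref{prop:convl} applies verbatim to $\ell_\mu$: it produces a vector $\nu_\mu\in\R^n$ such that strict pre-dissipativity holds at $(x_\mu^e,u_\mu^e)$ with the linear storage function $\lambda_\mu(x)=\nu_\mu^T x$. To upgrade this to strict dissipativity, I would note that compactness of $\Y$ implies compactness of its projection $\X$, so the continuous (indeed linear) function $\lambda_\mu$ is bounded from below on $\X$, closing the argument by Definition \ref{def: strDiss}(ii).

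The only subtle point is that Proposition \ref{prop:convl} presumes the constraint set to be of the explicit form \eqref{eq:Yg} with a convex $g$; since the Slater assumption in the theorem is stated as "the Slater condition from Proposition \ref{prop:convl}", this representation is implicitly in force, so no additional work is required. I expect essentially no serious obstacle here — the work is in organising the pieces (strict convexity of $\ell_\mu$, existence of the equilibrium minimiser via compactness plus Slater, and boundedness of the linear storage function on compact $\X$) and then simply citing Proposition \ref{prop:convl}.
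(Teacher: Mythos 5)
Your proposal is correct and follows essentially the same route as the paper: establish strict convexity of $\ell_\mu$ for $\mu\in(0,1)$ and then invoke Proposition \ref{prop:convl}, with the upgrade from strict pre-dissipativity to strict dissipativity coming from compactness of $\Y$ (hence of $\X$) exactly as the paper notes after Proposition \ref{prop:convbound}. The extra details you supply (existence of the equilibrium minimiser via compactness and the Slater point, boundedness of the linear storage function) are correct and merely make explicit what the paper leaves implicit.
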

		\begin{proof} One easily checks that strict convexity of either $\ell_1$ or $\ell_2$ together with convexity of the other stage cost implies strict convexity of $\ell_\mu = \mu \ell_1 + (1-\mu)\ell_2$ for all $\mu\in(0,1)$. With this observation, the claim follows from  Proposition \ref{prop:convl}. 
		\end{proof}
		
		\begin{remark}
			Obviously, when the assumptions of the theorem hold and both $\ell_1$ and $\ell_2$ are strictly convex, then we obtain strict dissipativity for all $\mu$ in the closed interval $[0,1]$. Theorem \ref{thm:convex} generalises easily to $m$ convex cost functions of which at least one is strictly convex, provided at least one $\mu_i$ corresponding to a strictly convex $\ell_i$ is not zero.
		\end{remark}

		\section{Results for nonlinear dynamics}\label{sec:nonlin}
		
		In this section we turn to the analysis of problems with general nonlinear dynamics \eqref{eq: system} and not necessarily convex stage costs. We start by presenting a necessary condition and continue with the derivation of several sufficient conditions for strict dissipativity of $\ell_\mu$ for $\mu\in(0,1)$.
		
		\subsection{A necessary condition}
		
		The following theorem identifies a property of the optimal equilibrium in case strict dissipativity holds. Since strict disspativity of the convexly combined stage cost may not hold for all weights $\mu\in[0,1]$ we state the next theorem for arbitrary compact subintervals.
		
		\begin{theorem}
			Assume that the system \eqref{eq: system} 
			is strictly dissipative for the cost function $\ell_\mu = \mu \ell_1 + (1-\mu)\ell_2$ for all $\mu\in[\underline{\mu},\overline{\mu}]\subseteq[0,1]$ and assume that the corresponding optimal equilibria $\equm$ are contained in a compact set $\widehat \Y \subset \Y$.  Then the map 
			\[ \mu \mapsto \eqm \]
			is continuous on $[\underline{\mu},\overline{\mu}]$. 
			\label{thm:discontinuous}\end{theorem}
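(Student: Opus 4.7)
The plan is to prove continuity by a standard subsequence / compactness argument, leveraging the fact that strict dissipativity pins down the $x$-component of the optimal equilibrium uniquely.

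First, I would extract uniqueness from strict dissipativity. For any $\mu$ in the interval and any equilibrium $(\hat x,\hat u)\in\Y$, applying the dissipation inequality \eqref{eq:diss} with $f(\hat x,\hat u)=\hat x$ gives
\[ \ell_\mu(\hat x,\hat u)-\ell_\mu(x_\mu^e,u_\mu^e)\ge\alpha(\|\hat x-x_\mu^e\|).\]
Hence $x_\mu^e$ is the \emph{unique} $x$-component among globally optimal equilibria for $\ell_\mu$ (the $u$-component need not be unique, which is consistent with the theorem only claiming continuity of $\mu\mapsto x_\mu^e$).

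Next I would argue by contradiction. Suppose continuity fails at some $\mu^\star\in[\underline\mu,\overline\mu]$; then there exist $\eps>0$ and a sequence $\mu_n\to\mu^\star$ with $\|x_{\mu_n}^e-x_{\mu^\star}^e\|\ge\eps$ for all $n$. Since the pairs $(x_{\mu_n}^e,u_{\mu_n}^e)$ all lie in the compact set $\widehat\Y$, a subsequence (which I relabel) converges to some $(\bar x,\bar u)\in\widehat\Y\subseteq\Y$. Continuity of $f$ gives $\bar x=f(\bar x,\bar u)$, so $(\bar x,\bar u)$ is an equilibrium. For any fixed equilibrium $(x,u)\in\Y$ we have $\ell_{\mu_n}(x_{\mu_n}^e,u_{\mu_n}^e)\le\ell_{\mu_n}(x,u)$ by global optimality; passing to the limit using joint continuity of $(\mu,x,u)\mapsto\ell_\mu(x,u)=\mu\ell_1(x,u)+(1-\mu)\ell_2(x,u)$ yields $\ell_{\mu^\star}(\bar x,\bar u)\le\ell_{\mu^\star}(x,u)$. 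Thus $(\bar x,\bar u)$ is a globally optimal equilibrium for $\ell_{\mu^\star}$, and the uniqueness result from the first step forces $\bar x=x_{\mu^\star}^e$, contradicting $\|x_{\mu_n}^e-x_{\mu^\star}^e\|\ge\eps$.

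The steps are all fairly routine once the uniqueness of the $x$-component is extracted from the strict dissipativity inequality. The only delicate point is being clear that global optimality of $(x_\mu^e,u_\mu^e)$ among \emph{all} equilibria in $\Y$ (not merely those in $\widehat\Y$) is available — this is precisely the property recorded just after Definition \ref{def:convex} and justifies comparing $\ell_{\mu_n}(x_{\mu_n}^e,u_{\mu_n}^e)$ against an arbitrary equilibrium $(x,u)\in\Y$ in the limit argument. I would expect no substantial obstacle; the proof is essentially a Berge-type maximum-theorem argument specialised to this setting.
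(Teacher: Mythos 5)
Your proposal is correct and follows essentially the same route as the paper: derive from strict dissipativity that $x_\mu^e$ is the unique $x$-component of a globally optimal equilibrium, assume discontinuity, extract a convergent subsequence of optimal equilibria via the compactness of $\widehat\Y$, show the limit is an equilibrium that is optimal for $\ell_{\mu^\star}$ by continuity of $f$ and $(\mu,x,u)\mapsto\ell_\mu(x,u)$, and contradict uniqueness. The only (cosmetic) difference is that you compare against an arbitrary equilibrium and then invoke uniqueness, whereas the paper compares directly against $(x_{\mu^\star}^e,u_{\mu^\star}^e)$ and contradicts the strict inequality; you also make the compactness step explicit, which the paper leaves implicit.
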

		\begin{proof} 
			It follows directly from strict dissipativity that 
			\beq \ell_\mu\equm <  \ell_\mu(x,u) \label{eq:strictmin}\eeq
			for all equilibria $(x,u)\in\Y$ with $x\ne \eqm$. Now assume that $\mu \mapsto \eqm$ is discontinuous at some $\mus\in[\underline{\mu},\overline{\mu}]$. Then, there is a sequence $\mu_n\to \mus$ in $[\underline{\mu},\overline{\mu}]$, such that $(x^e_{\mu_n},u^e_{\mu_n})$ converges to $(\hat x^e_{\mus},\hat u^e_{\mus})$ with $\hat x^{e}_{\mus} \ne \eq_{\mus}$. Since $f(\eq_{\mu_n},u^e_{\mu_n}) = \eq_{\mu_n}$, by continuity of $f$ we have that $f(\hat x^{e}_{\mus},\hat u^e_{\mus})=\hat x^{e}_{\mus}$, i.e., the limit is an equilibrium. Using continuity of $\ell_\mu$ and inequality \eqref{eq:strictmin} for $\mu=\mu_n$ and $(x,u)=(x^e_{\mus},u^e_{\mus})$, for this equilibrium it holds that
			\[ \ell_{\mus}(\hat \eq_{\mus},\hat u^e_{\mus}) = \lim_{n\to\infty} \ell_{\mu_n}(\eq_{\mu_n},u^e_{\mu_n}) \le  \lim_{n\to\infty} \ell_{\mu_n}(\eq_{\mus},u^e_{\mus})  =  \ell_{\mus}(\eq_{\mus},u^e_{\mus}).\]
			This, however, means that inequality \eqref{eq:strictmin} does not hold at $\mu=\mus$, which yields a contradiction.
		\end{proof}
		
		\begin{remark} The reasoning in the proof immediately carries over to higher dimensional $\mu = (\mu_1,\ldots,\mu_m)^T$. Thus, if the system is strictly dissipative for stage cost $\ell_\mu = \sum_{i}\mu_i\ell_i$ for all $\mu$ from a subset $\Omega\subset \{\mu\in[0,1]^m\,|\, \sum_i \mu_i = 1\}$, then $\mu \mapsto \eqm$ is continuous on $\Omega$.
		\end{remark}
		
		The theorem in particularly implies that if the globally optimal equilibria $\eqm$ change discontinuously with $\mu$, then strict dissipativity cannot hold. While the theorem is valid for general nonlinear dynamics, discontinuity of $\eqm$ can occur even in the case of linear dynamics, as the following example shows.
		
		\begin{example}
			Consider the dynamics $x^+ = x+u$ and the cost functions
			\[ \ell_1(x,u) = \frac12 x^4 - \frac14 x^3 - x^2 + \frac34 x  + u^2 \quad \mbox{ and } \quad 
			\ell_2(x,u) = (x-1)^2  + u^2, \]
			see Figure \ref{fig:costs}, with compact constraint set $\Y=[-10,10]^2$. 
			
			\begin{figure}[h]
				\begin{center}
					\includegraphics[width = 0.4\textwidth]{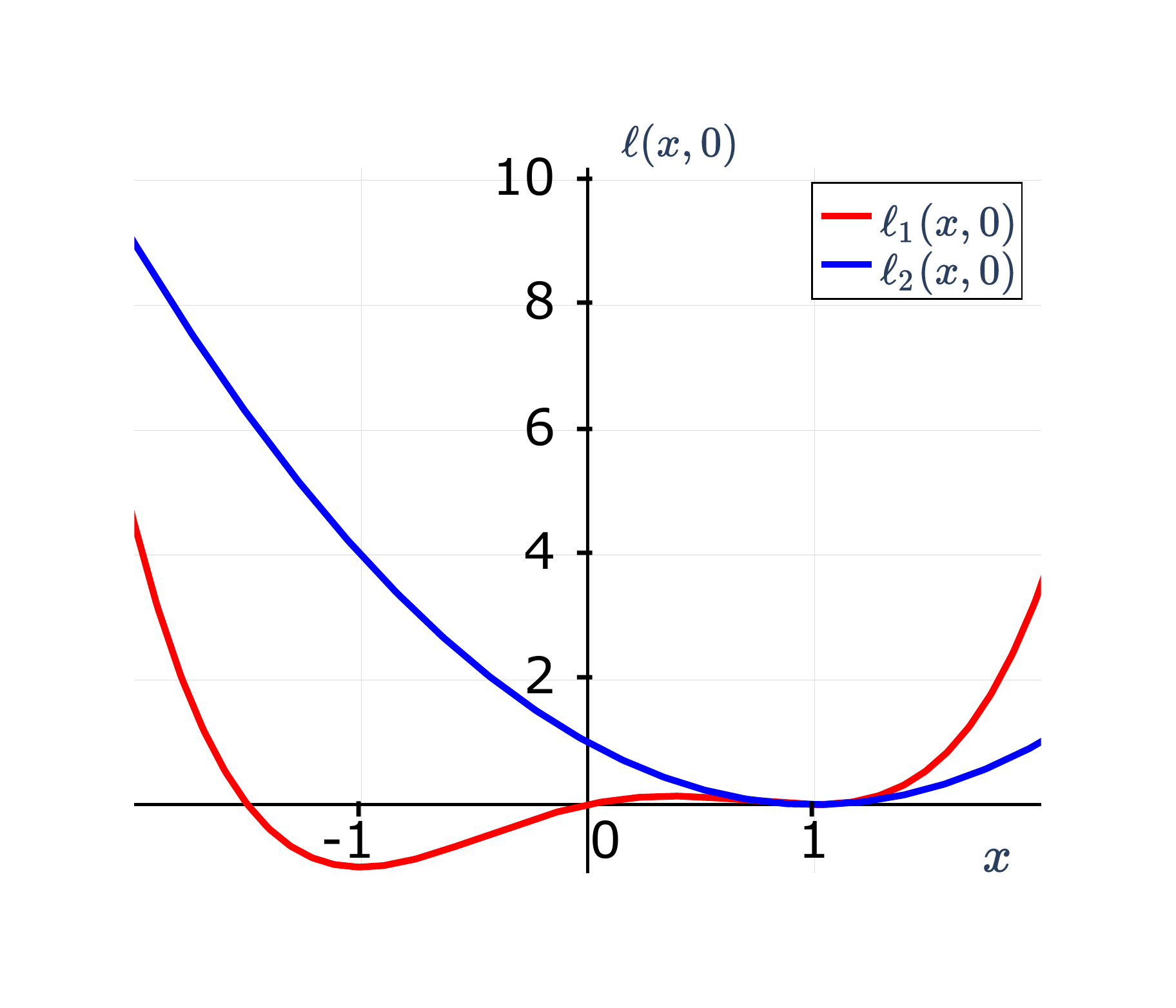}
					\caption{Graphs of cost functions $x\mapsto \ell_1(x,0)$ (red) and $x\mapsto \ell_2(x,0)$} (blue)\label{fig:costs}
				\end{center}
			\end{figure}
			
			The cost $\ell_1$ can be bounded from below by the convex cost $\hat\ell_1(x) = (x+1)^2/10-1+u^2$ and the global minimum $(-1,0)$ of $\ell_1$ and $\hat \ell_1$ coincides and is an equilibrium of the dynamics. Hence, the system 
			is strictly dissipative for stage cost $\ell_1$ by Proposition \ref{prop:convbound}. Since the cost $\ell_2$ is convex, the system 
			is strictly dissipative for stage cost $\ell_2$ by  Proposition \ref{prop:convl}. Since every $x\in\R$ is an equilibrium of the dynamics and the corresponding equilibrium control $u=0$ minimises $\ell_i$ with respect to $u$, the globally optimal equilibrium $(x^e_\mu,u^e_\mu)$ for cost $\ell_\mu=\mu\ell_1 + (1-\mu)\ell_2 +u^2$ coincides with the global minimum of $\ell_\mu$. This global minimum, however, changes discontinuously at $\mu^*=32/41$, as Figure \ref{fig:minima} shows.
			
			\begin{figure}[h]
				\begin{center}
					\includegraphics[width = 0.32\textwidth]{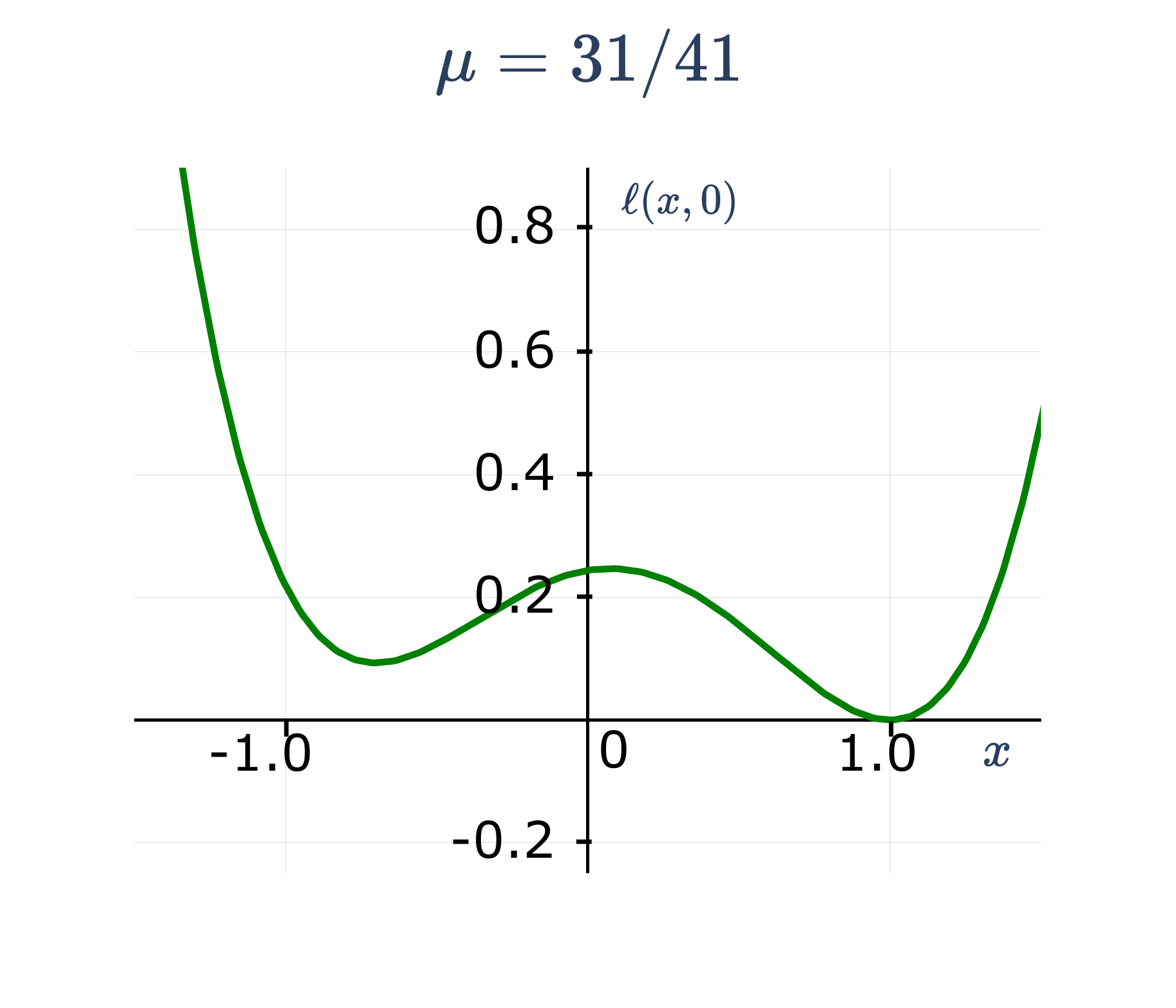}
					\includegraphics[width = 0.32\textwidth]{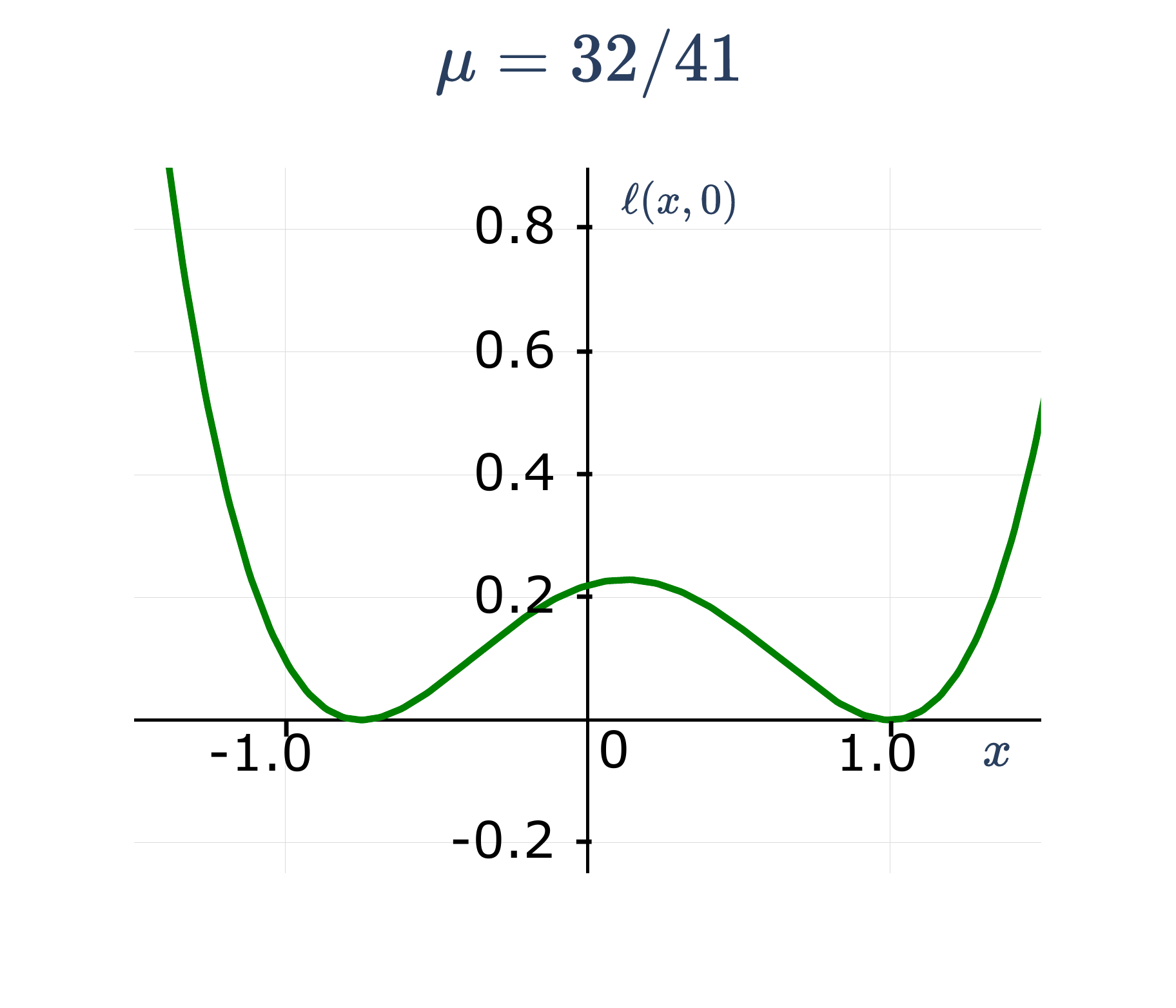}
					\includegraphics[width = 0.32\textwidth]{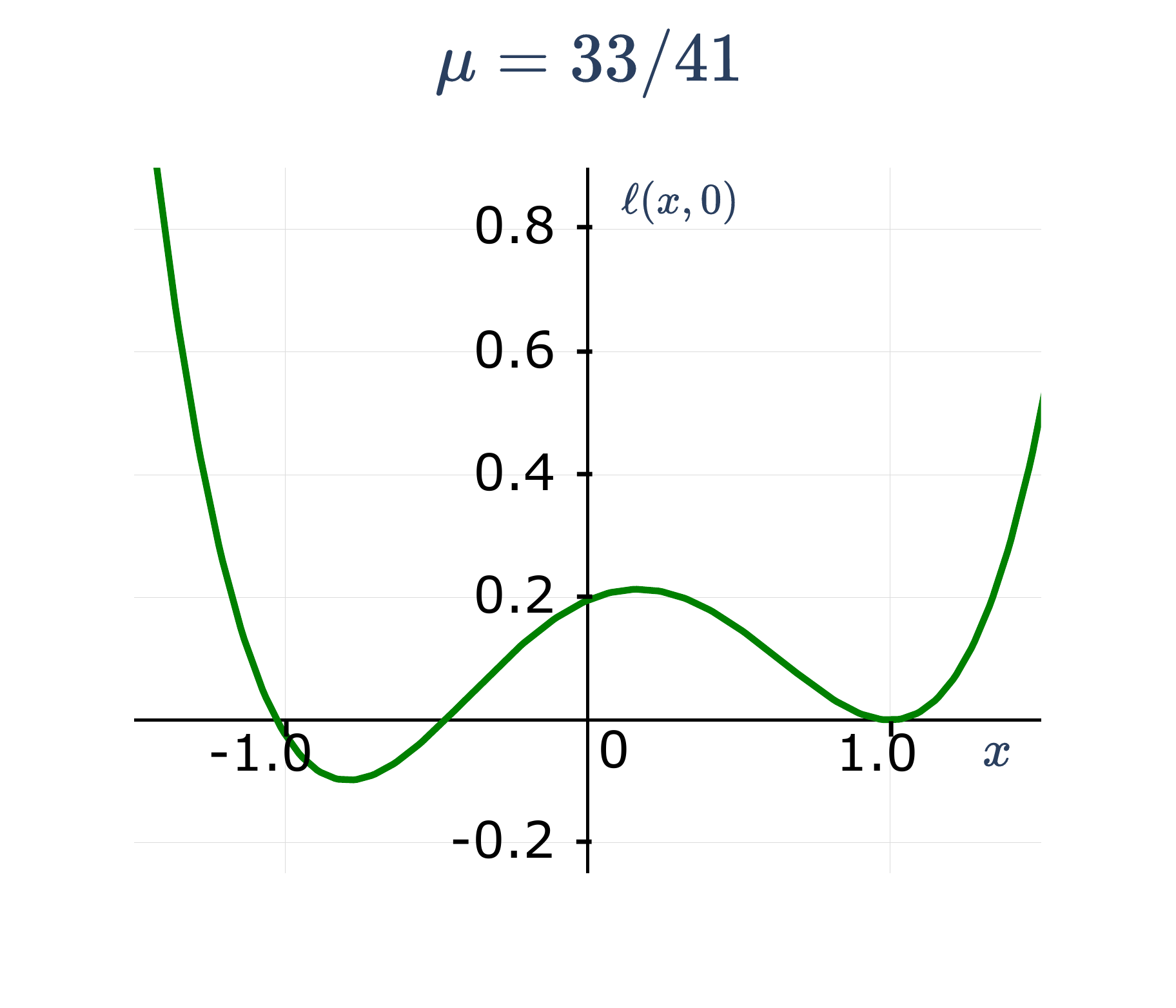}
					\caption{Graphs of cost functions $x\mapsto \ell_\mu(x,0)$ for $\mu=33/41$, $32/41$, $31/41$ (left to right)\label{fig:minima}}
				\end{center}
			\end{figure}
			
			We note that choosing the cost $\ell_1$ non-convex is crucial in this example, since for linear dynamics, convex cost $\ell_1$ and strictly convex cost $\ell_2$, Theorem \ref{thm:convex} shows strict dissipativity for all $\ell_\mu$, $\mu\in[0,1)$, which by Theorem \ref{thm:discontinuous} implies continuity of $x^e_\mu$ on $[0,1-\eps]$ for all $\eps>0$ and thus on $[0,1)$. 
		\end{example}

		\subsection{A sufficient condition for small changes in $\mu$}
		
		In the previous section we have seen that the optimal equilibria depend continuously on $\mu$ in case strict dissipativity holds. In this section we will see that strict dissipativity also depends---in a certain sense---continuously on $\mu$. More precisely, we will give conditions under which strict dissipativity in $\mus\in(0,1)$ implies strict dissipativity for small variations of $\mus$, i.e. for $\mu \in(\mus-\eps,\mus+\eps)$. 
		For this purpose, we assume that the constraint set $\Y$ is defined in terms of inequality constraints \eqref{eq:Yg}. Further, we observe that if the system 
		is strictly $(x,u)$-dissipative at some equilibrium $\equ$, then this equilibrium is the unique minimiser of the constrained optimisation problem \eqref{eq:opteq} and the unique minimiser of the rotated cost $\tell$. This is because from the strict $(x,u)$-dissipativity we can conclude that the rotated stage cost is bounded from below by $\tell(x,u)\geq \alpha(\norm{x-\eq,u-u^{e}})$ for all $(x,u)\in\Y$ and, thus
		\[0=\tell\equ =\min_{(x,u)\in\Y}\tell(x,u) < \tell(x,u)\]
		for all $(x,u)\in\Y$ with $(x,u)\ne \equ$. Further, we briefly recall some definitions from nonlinear programming, see for instance \cite[Chapter 3]{Bertsekas1999}. Namely, considering the constrained optimisation problem \eqref{eq:opteq} with $\Y$ of the form \eqref{eq:Yg} we call a point $(x^\star,u^\star)$ satisfying $h(x^\star,u^\star)=0$ \textit{regular} if $\nabla h_1(x^\star,u^\star),\dots, \nabla h_n(x^\star,u^\star)$ are linearly independent with $h(x,u):=x-f(x,u)$. For the second order sufficiency conditions we refer to \cite[Proposition 3.3.2]{Bertsekas1999} and for the \textit{strong} second order sufficiency conditions see, for instance, \cite{Jittorntrum1984, Robinson1980}.

		\begin{theorem}\label{thm:mu+eps}
			Assume that
			\begin{enumerate}[(i)]
				\item the functions $f,\ell_1,\ell_2,g$ are twice continuously differentiable and $\Y$ is bounded.
				\item the system \eqref{eq: system} 
					is strictly $(x,u)$-dissipative for the cost function $\ell_{\mus}= \mus\ell_1+(1-\mus)\ell_2$ at the equilibrium $(\eq_{\mus},u^{e}_{\mus})$ for some $\mus\in[0,1]$ and the corresponding storage function $\lambda_{\mus}$ is twice continuously differentiable.
				\item the equilibrium $(\eq_{\mus},u^{e}_{\mus})$ is a regular point of problem \eqref{eq:opteq} and satisfies the strong second order sufficiency conditions for problem \eqref{eq:opteq}, with $\ell=\ell_{\mus}$ and for $\min_{(x,u)\in\Y}\tell_{\mus}$ with $\Y$, respectively, defined as in \eqref{eq:Yg}.
			\end{enumerate}
			Then there exists $\eps>0$ such that for all $\mu\in(\mus-\eps,\mus+\eps)\cap [0,1]$, there exists an equilibrium $\equm$ such that the system \eqref{eq: system} 
			is strictly $(x,u)$-dissipative for stage cost $\ell_\mu=\mu\ell_1+(1-\mu)\ell_2$.
		\end{theorem}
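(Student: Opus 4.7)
The plan is to combine parametric sensitivity analysis with a local-plus-global lower bound argument for the rotated cost. The strong second-order sufficiency conditions in (iii) are precisely what classical sensitivity theory (Fiacco, Robinson, Jittorntrum) requires to conclude that, for $\mu$ close to $\mus$, the primal-dual solution $(x_\mu^e, u_\mu^e, \nu_\mu^e)$ of \eqref{eq:opteq} with cost $\ell_\mu$ exists, is locally unique, and depends Lipschitz continuously on $\mu$. Applied to the rotated problem $\min_{(x,u)\in\Y}\tell_{\mus}$---whose unique minimiser $(x_{\mus}^e, u_{\mus}^e)$ satisfies the strong SOSC by assumption---the same tools ensure that its positive-definite local quadratic behaviour survives under small $C^2$ perturbations of the cost.

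Next, I would define a storage function $\lambda_\mu$ for $\ell_\mu$ by adding to $\lambda_{\mus}$ a correction in $x$ chosen so that the identity $\nabla_x \lambda_\mu(x_\mu^e)=\nu_\mu^e$ from Proposition \ref{prop:lambdaLagrange} holds exactly. By hypothesis (ii) and the Lipschitz continuity from Step 1, the resulting $\lambda_\mu$ is $C^2$ and converges to $\lambda_{\mus}$ in $C^1$ on $\X$ as $\mu\to\mus$. The associated rotated stage cost $\tell_\mu(x,u)=\ell_\mu(x,u)-\ell_\mu(x_\mu^e,u_\mu^e)+\lambda_\mu(x)-\lambda_\mu(f(x,u))$ then vanishes with zero gradient at $(x_\mu^e,u_\mu^e)$ by the KKT conditions \eqref{eq:necopt}, and converges $C^0$-uniformly to $\tell_{\mus}$ on the compact set $\Y$.

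To obtain the $\KK_\infty$ lower bound, I would argue in two regimes. Fix a small neighbourhood $\NN$ of $(x_{\mus}^e,u_{\mus}^e)$. Outside $\NN$, strict $(x,u)$-dissipativity of $\ell_{\mus}$ together with compactness of $\Y$ gives $\tell_{\mus}\geq c>0$, and the uniform perturbation from Step 2 preserves a bound of at least $c/2$ for $|\mu-\mus|$ sufficiently small. Inside $\NN$, the Hessian-level strong SOSC for $\tell_{\mus}$ carries over by continuity to a positive-definite Hessian of $\tell_\mu$ at $(x_\mu^e,u_\mu^e)$ and hence to a quadratic lower bound in $\|(x-x_\mu^e,u-u_\mu^e)\|$. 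Patching the two bounds on the compact set $\Y$ furnishes the desired $\alpha\in\KK_\infty$. The principal obstacle will be engineering the correction in $\lambda_\mu$ precisely enough to enforce $\nabla_x \lambda_\mu(x_\mu^e)=\nu_\mu^e$ while remaining $C^2$ and close to $\lambda_{\mus}$, since a naive linear offset yields the identity only approximately; this likely requires an implicit-function-theorem argument on the auxiliary system defining the correction, together with care that the resulting global minimiser of $\tell_\mu$ on $\Y$ is indeed $(x_\mu^e,u_\mu^e)$ and not some stray point created by the perturbation.
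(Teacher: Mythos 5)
Your proposal is correct and follows essentially the same route as the paper, which omits the proof and instead points to the sensitivity-analysis argument of \cite[Theorem 5]{Muller_et_al_TAC15}: strong second order sufficiency plus regularity give Lipschitz dependence of the primal-dual pair on $\mu$, a corrected storage function is built so that its gradient at $(x^e_\mu,u^e_\mu)$ equals $\nu_\mu$, and the lower bound is obtained by combining the perturbed Hessian condition near the equilibrium with uniform positivity of $\tell_{\mus}$ away from it on the bounded set $\Y$. The ``principal obstacle'' you flag is not actually one: the linear correction $\lambda_\mu(x)=\lambda_{\mus}(x)+\bigl(\nu_\mu-\nabla_x\lambda_{\mus}(x^e_\mu)\bigr)^Tx$ enforces $\nabla_x\lambda_\mu(x^e_\mu)=\nu_\mu$ exactly, no implicit-function argument for the correction is needed (this is precisely the construction the paper uses later in \eqref{def_lamba_mu}).
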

		
		This theorem is similar to other theorems in the literature, such as \cite[Theorem 5]{Muller_et_al_TAC15}, where small changes in the constraints are considered and \cite[Theorem 8.2]{Gruene2021}, where small changes in the discount factor are considered. As explained in \cite[Remark 8]{Muller_et_al_TAC15}, the proof of Theorem \ref{thm:mu+eps} follows by a slight modification of the proof of \cite[Theorem 5]{Muller_et_al_TAC15}. Hence, we omit it here. 
		\begin{remark} Since the reasoning used in these proofs was already used for multi-dimen\-sio\-nal parameters in \cite[Theorem 5]{Muller_et_al_TAC15}, the result extends readily to more than two cost functions.\end{remark}

		\subsection{Sufficient conditions for all $\mu\in [0,1]$}
		
		In this section we give sufficient conditions under which we can ensure strict dissipativity for all $\mu\in[0,1]$ provided we have strict dissipativity for $\mu=0$ and $\mu=1$. We start with a theorem that shows that this is always true if the optimal equilibrium does not depend on $\mu$.
		
		\begin{theorem}
			Assume that the system \eqref{eq: system} 
			is strictly dissipative for the cost functions $\ell_1$ and $\ell_2$ at the same equilibrium $x^e$. Then the system 
			is strictly dissipative for the cost function $\ell_\mu = \mu \ell_1 + (1-\mu)\ell_2$ for all $\mu\in[0,1]$.
		\end{theorem}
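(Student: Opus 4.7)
The plan is to use convex combinations of the ingredients of strict dissipativity for $\ell_1$ and $\ell_2$ to directly construct a storage function for $\ell_\mu$. Interpreting the hypothesis in the natural way, the common equilibrium is a pair $(x^e,u^e)\in\Y$ that is an equilibrium for both supply rates, i.e.\ $(x^e,u^e)$ serves as $\equi$ for both $i=1,2$. Let $\lambda_i:\X\to\R$ and $\alpha_i\in\KK_\infty$ be associated to $\ell_i$ as in Definition~\ref{def: strDiss}(v), so that for every $(x,u)\in\Y$ with $f(x,u)\in\X$,
\begin{equation*}
  \ell_i(x,u)-\ell_i(x^e,u^e)+\lambda_i(x)-\lambda_i(f(x,u))\ \ge\ \alpha_i(\|x-x^e\|),\qquad i=1,2.
\end{equation*}

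Next I would introduce the candidate ingredients for the combined problem,
\begin{equation*}
  \lambda_\mu:=\mu\lambda_1+(1-\mu)\lambda_2,\qquad \alpha_\mu:=\mu\alpha_1+(1-\mu)\alpha_2,
\end{equation*}
multiply the $i=1$ inequality by $\mu\ge 0$ and the $i=2$ inequality by $1-\mu\ge 0$, and add them. Linearity on the left and on the right yields
\begin{equation*}
  \ell_\mu(x,u)-\ell_\mu(x^e,u^e)+\lambda_\mu(x)-\lambda_\mu(f(x,u))\ \ge\ \alpha_\mu(\|x-x^e\|)
\end{equation*}
for all admissible $(x,u)$. So the supply-rate inequality from Definition~\ref{def: strDiss}(i),(v) is inherited for free. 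The only remaining checks are that $\alpha_\mu\in\KK_\infty$ and that $\lambda_\mu$ has the required regularity/boundedness.

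For the comparison function: $\alpha_\mu$ is continuous, vanishes at $0$, and strictly increasing (at the boundary values $\mu=0,1$ it reduces to $\alpha_2$ or $\alpha_1$; for $\mu\in(0,1)$ it is the sum of a $\KK_\infty$ function and a non-negative $\KK$ function). Unboundedness follows from $\alpha_\mu(r)\ge\min\{\mu,1-\mu\}\,\alpha_i(r)\to\infty$ whenever $\mu\in(0,1)$, with the endpoint cases handled directly. For the storage function: boundedness of $\lambda_\mu$ on bounded subsets of $\X$ (Definition~\ref{def: strDiss}(i)) and boundedness from below on $\X$ (Definition~\ref{def: strDiss}(ii)) both pass through finite convex combinations with non-negative weights. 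Combining these observations gives strict dissipativity at $(x^e,u^e)$ for $\ell_\mu$ for every $\mu\in[0,1]$.

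There is essentially no obstacle: the result is a straightforward linearity argument, and the only point that requires a moment's care is verifying the $\KK_\infty$ property at the endpoints $\mu\in\{0,1\}$ and confirming that ``same equilibrium'' is read as a common pair $(x^e,u^e)$ so that the constants $\ell_i(x^e,u^e)$ combine cleanly into $\ell_\mu(x^e,u^e)$ without any mismatch term. The extension to $m$ cost functions along the lines of the remark after Theorem~\ref{thm:lq} is immediate, since all the operations used are linear in the weights.
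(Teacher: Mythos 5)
Your proof is correct and follows essentially the same route as the paper: form the convex combinations $\lambda_\mu=\mu\lambda_1+(1-\mu)\lambda_2$ and $\alpha_\mu=\mu\alpha_1+(1-\mu)\alpha_2$, add the weighted dissipation inequalities, and verify that $\lambda_\mu$ is bounded from below and $\alpha_\mu\in\KK_\infty$. Your extra care about the endpoint cases $\mu\in\{0,1\}$ is a fine (if minor) addition to what the paper states more briefly.
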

		\begin{proof} For $\ell_1$ and $\ell_2$ there are storage functions $\lambda_1$, $\lambda_2$ as well as $\KK_\infty$ functions $\alpha_1$, $\alpha_2$ such that we have the inequalities
			\beaa
			\lambda_1(f(x,u))  & \le & \lambda_1(x) + \ell_1(x,u) - \ell_1(x^e,u^e) - \alpha_1(\|x-x^e\|) \\
			\lambda_2(f(x,u))  & \le & \lambda_2(x) + \ell_2(x,u) - \ell_2(x^e,u^e) - \alpha_2(\|x-x^e\|).
			\eeaa
			Adding $\mu$-times the first equation and $(1-\mu)$-times the second equation yields
		\begin{align*}
			 \mu\lambda_1(f(x,u)) &+ (1-\mu)\lambda_2(f(x,u)) \\
			 &  \le  \mu\lambda_1(x) +  (1-\mu)\lambda_2(x) + \mu\ell_1(x,u) + (1-\mu)\ell_2(x,u)\\
			&  - \; \mu\ell_1(x^e,u^e) - (1-\mu) \ell_2(x^e,u^e)\\
			& - \; \mu\alpha_1(\|x-x^e\|) - (1-\mu)\alpha_2(\|x-x^e\|) .
			\end{align*}
			Defining $\lambda_\mu = \mu\lambda_1 + (1-\mu)\lambda_2$ and $\alpha_\mu = \mu\alpha_1 + (1-\mu)\alpha_2$ one thus obtains
			\[ \lambda_\mu(f(x,u)) \le \lambda_\mu(x) + \ell_\mu(x,u) - \ell_\mu(x^e,u^e) - \alpha_\mu(\|x-x^e\|). \]
			Since one easily checks that $\lambda_\mu$ is bounded from below (since both $\lambda_1$ and $\lambda_2$ are) and that $\alpha_\mu\in \KK_\infty$ for all $\mu\in[0,1]$, this shows the desired inequality \eqref{eq:diss} for all these $\mu$.
		\end{proof}
		
		\begin{remark} The proof remains completely identical for more than two cost functions, hence the statement holds accordingly in this case.\end{remark}
		
		When $\equm$ depends on $\mu$, the situation becomes more complicated. The construction we present in the remainder of this section is motivated by the linear quadratic result from Theorem \ref{thm:lq}. This theorem shows that the storage function for $\ell_\mu$ can be obtained by adding a linear term to the convex combination of the storage functions for $\ell_1$ and $\ell_2$. It follows from Proposition \ref{prop:lambdaLagrange} that this linear correction must be such that the gradient of the resulting storage function at $\equm$ equals the Lagrange multiplier of the optimal equilibrium problem 
		\begin{equation}\label{eq:optEqProb}
			\begin{split}
				\min_{(x,u)\in\Y}\mu\ell_1(x,u)&+(1-\mu)\ell_2(x,u)\\
				\mbox{s.t. } x&=f(x,u),
			\end{split}
		\end{equation}
		which we denote by $\nu_\mu$ and which satisfies the necessary optimality conditions \eqref{eq:Lagrange} for $\ell=\ell_\mu$. This idea was used before in \cite[Theorem 5]{Muller_et_al_TAC15}. We provide the following theorem in two versions. Theorem \ref{thm:linearcorrection} yields only local strict dissipativity (cf.\ Part (iv) of Definition \ref{def: strDiss}), while Theorem \ref{thm:linearcorrection2} yields standard (i.e., not only local) strict dissipativity, yet under stronger assumptions.
		
		\begin{theorem}\label{thm:linearcorrection}
			Assume that the system \eqref{eq: system} 
			is strictly dissipative for cost functions $\ell_1$ and $\ell_2$. Suppose that $\ell_1$ and $\ell_2$ as well as the corresponding storage functions $\lambda_1$ and $\lambda_2$ are twice continuously differentiable. Furthermore, assume that for each $\mu\in[0,1]$ the optimal equilibrium satisfies $\equm\in\rint \Y$ and that there exist $m_2(\mu)>m_1(\mu)\geq 0$ such that 
			\begin{align}
				\nabla^2_{(x,u)}\Big(\mu\tell_1\equm+(1-\mu)\tell_2\equm\Big) \geq m_2(\mu)I \label{strong_convexity_gamma} 
			\end{align}
			for $\tilde \ell_i$ from \eqref{eq:diss} and
			\begin{align}
				\nabla^2_{(x,u)}\Big(\tilde\lambda_{\mu}^Tf\equm\Big) \leq m_1(\mu)I \label{condition_nabla^2_f(x,u)}
			\end{align}
			for all $\mu\in[0,1]$, where
			\begin{align}
				\tilde\lambda_{\mu} = \nu_{\mu} - \mu \nabla_x\lambda_1(\eqm) - (1-\mu)\nabla_x\lambda_2(\eqm) \in \mathbb{R}^n \label{def_tilde_lambda_mu}
			\end{align}
			and $\nu_\mu$ is the Lagrange multiplicator for \eqref{eq:optEqProb}.
			Then, the system 
			is locally strictly dissipative for cost function $\ell_{\mu}$ for all $\mu\in[0,1]$ with storage function
			\begin{align}
				\lambda_{\mu}(x) = \mu\lambda_1(x) + (1-\mu)\lambda_2(x) + \tilde\lambda_{\mu}^Tx.  \label{def_lamba_mu}
			\end{align}
		\end{theorem}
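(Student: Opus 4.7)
The plan is to show that the rotated stage cost
\[ \tell_\mu(x,u) := \ell_\mu(x,u) - \ell_\mu\equm + \lambda_\mu(x) - \lambda_\mu(f(x,u)) \]
associated with the proposed storage function \eqref{def_lamba_mu} has a strict local minimum of value $0$ at $\equm$ with a positive definite Hessian, which via Taylor's theorem yields a quadratic lower bound $\alpha(\|x-\eqm\|)$ on a compact neighbourhood of $\equm$. First I would establish the algebraic identity
\[ \tell_\mu(x,u) = \mu\tell_1(x,u) + (1-\mu)\tell_2(x,u) + \tilde\lambda_\mu^{T}\bigl(x - f(x,u)\bigr) + K(\mu), \]
where $K(\mu)$ is a constant collecting the differences between $\ell_\mu\equm$ and the equilibrium values $\ell_i(x_i^e,u_i^e)$ used to define $\tell_i$. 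This drops out of all derivatives, and the affine term $\tilde\lambda_\mu^{T}(x-f(x,u))$ vanishes at $\equm$ because $f\equm=\eqm$.

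Next I would verify that $\equm$ is a critical point of $\tell_\mu$. By definition \eqref{def_tilde_lambda_mu} the correction $\tilde\lambda_\mu$ is chosen exactly so that $\nabla_x\lambda_\mu(\eqm)=\nu_\mu$, the Lagrange multiplier of \eqref{eq:optEqProb}. Differentiating $\tell_\mu$ directly at $\equm$ and using $f\equm=\eqm$, the gradient components reduce to
\[ \nabla_x\ell_\mu\equm + (I - \partial_x f\equm^{T})\nu_\mu \quad\text{and}\quad \nabla_u\ell_\mu\equm - \partial_u f\equm^{T}\nu_\mu, \]
which are precisely the KKT equations \eqref{eq:necopt} for \eqref{eq:optEqProb} with $\ell=\ell_\mu$; no inequality multipliers appear because $\equm\in\rint\Y$, so these expressions vanish. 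Trivially $\tell_\mu\equm=0$ as well.

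Twice differentiating the identity from the first step and using that $\tilde\lambda_\mu^{T}x$ is linear in $(x,u)$, I obtain
\[ \nabla^2_{(x,u)}\tell_\mu\equm = \mu\nabla^2_{(x,u)}\tell_1\equm + (1-\mu)\nabla^2_{(x,u)}\tell_2\equm - \nabla^2_{(x,u)}\bigl(\tilde\lambda_\mu^{T}f\bigr)\equm. \]
Applying the hypotheses \eqref{strong_convexity_gamma} and \eqref{condition_nabla^2_f(x,u)} lower-bounds this by $(m_2(\mu)-m_1(\mu))I$, which is strictly positive definite. Since $f,\ell_i,\lambda_i\in C^2$, the Hessian of $\tell_\mu$ is continuous, so there is a neighbourhood $\NN$ of $\equm$ on which it remains at least $\tfrac{m_2(\mu)-m_1(\mu)}{2}I$. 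A second-order Taylor expansion combined with $\tell_\mu\equm=0$ and $\nabla\tell_\mu\equm=0$ then yields
\[ \tell_\mu(x,u) \;\ge\; \tfrac{m_2(\mu)-m_1(\mu)}{4}\|(x,u)-\equm\|^2 \;\ge\; \tfrac{m_2(\mu)-m_1(\mu)}{4}\|x-\eqm\|^2 \]
for $(x,u)\in\rcl\NN$. Taking $\alpha(r)=\tfrac{m_2(\mu)-m_1(\mu)}{4}r^2\in\KK_\infty$ gives local strict pre-dissipativity, and continuity of $\lambda_\mu$ on the compact set $\rcl\NN$ makes it bounded from below there, upgrading the conclusion to local strict dissipativity as claimed.

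The main obstacle I anticipate is the bookkeeping in the algebraic identity of the first step: $\tell_1$ and $\tell_2$ are centred at their own equilibria $(x_i^e,u_i^e)$, whereas $\tell_\mu$ is centred at $\equm$, so the various constant terms must be tracked carefully. The critical-point argument hinges on using Proposition \ref{prop:lambdaLagrange} through the very definition of $\tilde\lambda_\mu$, so that the seemingly opaque linear correction in \eqref{def_lamba_mu} is precisely what is needed to reconcile $\nabla_x\lambda_\mu(\eqm)$ with $\nu_\mu$ and let the KKT conditions annihilate the gradient.
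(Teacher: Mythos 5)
Your proposal is correct and follows essentially the same route as the paper's proof: rewrite the rotated cost $\tell_\mu$ in terms of $\mu\tell_1+(1-\mu)\tell_2$ plus the linear correction, use the definition of $\tilde\lambda_\mu$ so that $\nabla_x\lambda_\mu(\eqm)=\nu_\mu$ and the KKT conditions kill the gradient, and then bound the Hessian below by $(m_2(\mu)-m_1(\mu))I$ via \eqref{strong_convexity_gamma} and \eqref{condition_nabla^2_f(x,u)}. Your closing Taylor argument producing an explicit quadratic $\KK_\infty$ bound is in fact slightly more detailed than the paper, which stops at ``strict local minimiser with positive definite Hessian implies local strict dissipativity.''
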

		
		\begin{proof}Define
			\begin{align}
				\tell_{\mu}(x,u):=\ell_{\mu}(x,u)-\ell_{\mu}\equm + \lambda_{\mu}(x) - \lambda_{\mu}(f(x,u)) \label{def_gamma_mu}
			\end{align}
			with $\lambda_{\mu}$ from~\eqref{def_lamba_mu}. In the following, we show that $\equm$ is a strict local minimiser of $\tell_{\mu}$, implying local strict dissipativity of the system 
			for cost function $\ell_{\mu}$. This will be done by showing that $\nabla_{(x,u)}\tell_{\mu}\equm=0$ and $\nabla^2_{(x,u)}\tell_{\mu}\equm>0$.
			
			First, we define $h(x,u)=x-f(x,u)$ and $\Lambda_{\mu}(x,u)=\lambda_{\mu}(x) - \lambda_{\mu}(f(x,u))$ and note that
			\begin{align*}
				\nabla_{(x,u)}\Lambda_{\mu}\equm&=\nabla_{(x,u)}\lambda_{\mu}(\eqm)-\nabla_{(x,u)}\lambda_{\mu}(f\equm)\\
				&=[I \quad0 ]^T\nabla_x\lambda_{\mu}\eqm-\nabla_{(x,u)}f\equm\nabla_x\lambda_{\mu}(f\equm)\\
				&=\left([I\quad 0]^T - \nabla_{(x,u)}f(x,u)\right)\nabla_x\lambda_{\mu}(\eqm) \\
				&= \nabla_{(x,u)}h\equm\nabla_x \lambda_{\mu}(\eqm),
			\end{align*}
			where the second equation follows from the chain rule and the third follows from the equilibrium property of $\equm$. Further, by using the definitions of $\lambda_{\mu}$ and $\tilde\lambda_{\mu}$ in equations~\eqref{def_lamba_mu} and~\eqref{def_tilde_lambda_mu}, respectively, we obtain the derivative of the storage function
				\begin{align*}
				\nabla_x\lambda_{\mu}(\eqm)&=\nabla_x(\mu\lambda_1(\eqm)+(1-\mu)\lambda_2(\eqm)+\tilde{\lambda}_{\mu}^T\eqm)\\
				&=\mu\nabla_x\lambda_1(\eqm)+(1-\mu)\nabla_x\lambda_2(\eqm)+\tilde{\lambda}_{\mu}\\
				&=\mu\nabla_x\lambda_1(\eqm)+(1-\mu)\nabla_x\lambda_2(\eqm)+\nu_{\mu}-\mu\nabla_x\lambda_1(\eqm)\\
				&\phantom{\mu\nabla_x\lambda_1(\eqm)+(1-\mu)\nabla_x\lambda_2(\eqm)+\nu_{\mu}}-(1-\mu)\nabla_x\lambda_2(\eqm)\\
				&=\nu_{\mu}
			\end{align*}
			Using the two equations above we get that
			\begin{align*}
				\nabla_{(x,u)}\tell_{\mu}\equm &= \nabla_{(x,u)}\ell_{\mu}\equm + \nabla_{(x,u)}\Lambda_{\mu}\equm \\
				&= \nabla_{(x,u)}\ell_{\mu}\equm + \nabla_{(x,u)}h\equm^T\nabla_x\lambda_{\mu}(\eqm) \\
				&= \nabla_{(x,u)}\ell_{\mu}\equm + \nabla_{(x,u)}h\equm^T\nu_{\mu} \\
				&=0.
			\end{align*}
			Here, the last equality follows since it corresponds to the KKT conditions of problem \eqref{eq:optEqProb}, similar as in the proof of Proposition \ref{prop:lambdaLagrange}.
			
			
			Furthermore, we obtain
			\begin{align*}
				&	\nabla^2_{(x,u)}\tell_{\mu}\equm \\
				&= \nabla_{(x,u)}^2\ell_{\mu}\equm+\nabla_{(x,u)}^2\Lambda_{\mu}\equm\\
				&=\nabla_{(x,u)}^2\ell_{\mu}\equm+\nabla_{(x,u)}^2\left(\mu\lambda_1(\eqm)+(1-\mu)\lambda_2(\eqm)+\tilde{\lambda}_{\mu}^T\eqm\right)\\
				& \qquad \quad \qquad-\nabla_{(x,u)}^2\left(\mu\lambda_1(f\equm)+(1-\mu)\lambda_2(f\equm)+\tilde{\lambda}_{\mu}^Tf\equm\right)\\
				&=\nabla_{(x,u)}^2\left(\mu\tell_1\equm+(1-\mu)\tell_2\equm\right)+\nabla_{(x,u)}^2 \tilde{\lambda}_{\mu}^T\eqm - \nabla_{(x,u)}^2\tilde{\lambda}_{\mu}^Tf\equm\\
				&=\nabla_{(x,u)}^2\left(\mu\tell_1\equm+(1-\mu)\tell_2\equm\right)- \nabla_{(x,u)}^2\tilde{\lambda}_{\mu}^Tf\equm\\
				&\geq (m_2(\mu)-m_1(\mu))I > 0,
			\end{align*}
			which finishes the proof of the theorem. \end{proof}
		\begin{remark}
			The assumption $\equm\in \rint\Y$ in Theorem \ref{thm:linearcorrection} can be omitted if we assume that the second order sufficiency conditions hold for problem \eqref{eq:optEqProb}. Then, we have to use similar assumptions as in Theorem \ref{thm:mu+eps} and the corresponding proof of \cite[Theorem~5]{Muller_et_al_TAC15}.
		\end{remark}
		
		\begin{remark} The assertion and proof of Theorem \ref{thm:linearcorrection} can be extended to $m$ cost functions $\ell_1,\ldots,\ell_m$ if all convex combinations with two functions ($\mu \tilde\ell_1 + (1-\mu)\tilde \ell_2$, $\mu \nabla_x\lambda_1+(1-\mu)\nabla_x\lambda_2$, $\mu \lambda_1+(1-\mu)\lambda_2$, etc.) in the assumptions and in the proof are replaced by the respective convex combinations of $m$ cost functions $\sum_i\mu_i\tilde \ell_i$, $\sum_i\mu_i\nabla_x\lambda_i$, etc.\label{rem:ext_linearcorrection}\end{remark}
		
		\begin{remark}
			It follows from Proposition \ref{prop:lambdaLagrange} that if a storage function of the form \emph{convex combination of $\lambda_1$ and $\lambda_2$ plus linear correction} exists for a given value of $\mu$, then it must be of the form \eqref{def_lamba_mu}. However, as the following example shows, this construction may fail to produce a valid storage function.
		\end{remark}
		
		\begin{example} \label{ex:linearfail} Consider
			\[ x^+ = f(x,u) = 2x-x^2+u+u^2+u^3 \]
			with cost functions 
			\[ \ell_1(x,u) = 2x^2 + 0.0001 u^2 \quad \mbox{ and } \quad \ell_2(x,u) = 2x^2 + 0.9999 u^2 + 2 u.\]
			For the purpose of illustrating the connection to multiobjective optimal control, we consider an optimal control problem of the form \eqref{eq:ocp} with stage cost $\ell_\mu = \mu\ell_1 +(1-\mu)\ell_2$, $N=10$, and $x_0=1$. We can solve this problem numerically by choosing a finite set of weights from the interval $[0,1]$, for more details see \cite{Ehrgott2005}. By plotting the two objectives against each other for the resulting minimisers (where $\ell_1$, $\ell_2$ correspond to $J_1$, $J_2$, respectively) we get the nondominated set (also known as Pareto-front), which is illustrated in Figure \ref{fig:pareto}.
			\begin{figure}[h]
				\begin{center}
					\includegraphics[width=0.5\textwidth]{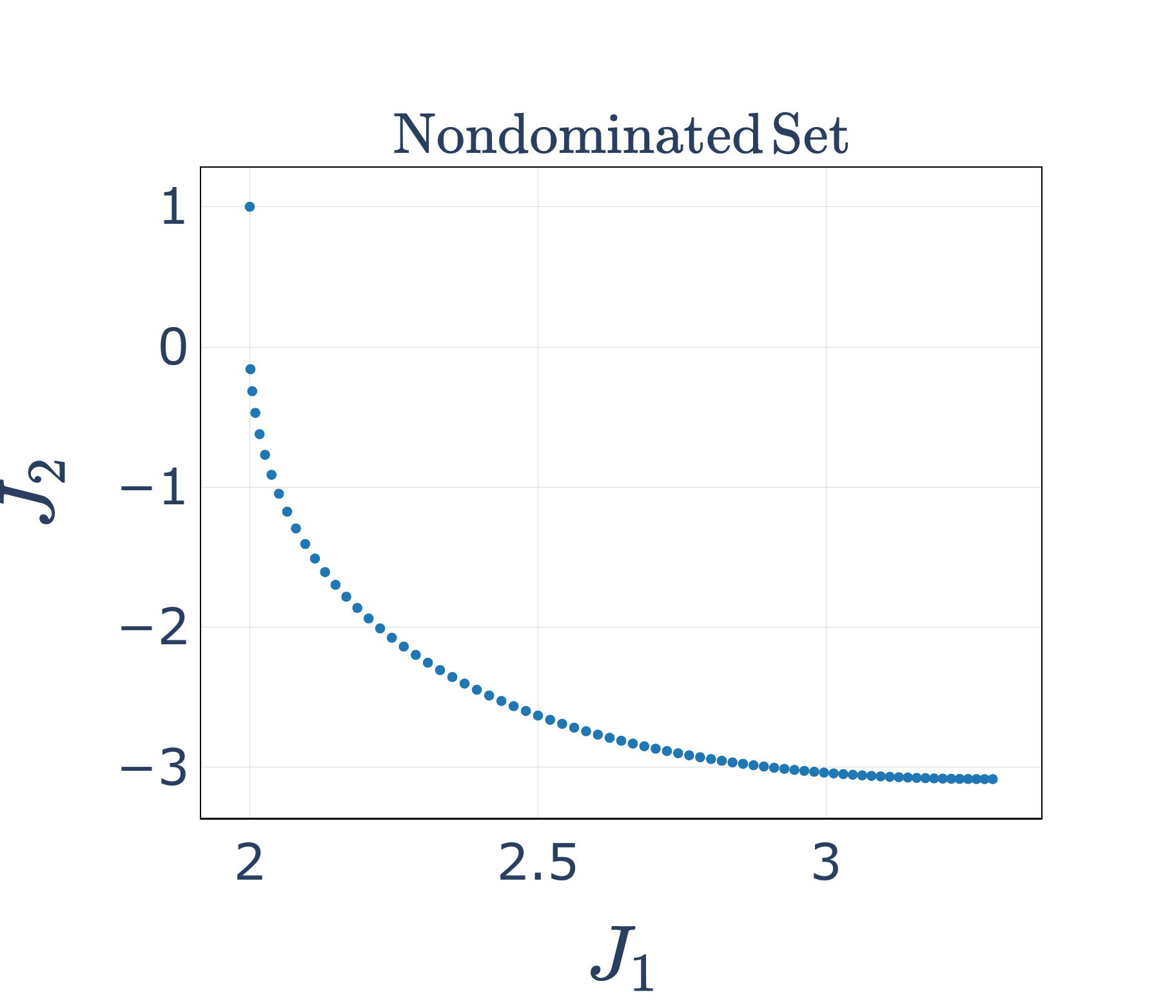}
					\caption{Nondominated set of the corresponding optimal control problem \label{fig:pareto}}
				\end{center}
			\end{figure}
		We like to remark here that the weighted sum approach does not parametrise all of the efficient solution, as already mentioned in the introduction. This may also be reason for the small gap on the left side of the nondominated set.
			
			For both cost functions, strict dissipativity holds locally with storage functions and optimal equilibrium
			\[ \lambda_1(x) = 0, \quad (x^e_1,u^e_1)=(0,0) \]
			for $\ell_1$ and 
			\[ \lambda_2(x)= 2.1986096 x, \quad (x^e_2,u^e_2)= (0.2618259, -0.2357480)\]
			for $\ell_2$ (all values are rounded to 6 or 7 digits). For $\ell_1$ this is obvious since the function is strictly convex, hence strict dissipativity holds even globally, while for $\ell_2$ one checks that the rotated cost $\tilde\ell_2$ has a positive definite second derivative in $(x^e_2,u^e_2)$.
			
			For $\mu=0.5$ one computes the Lagrange multiplier of \eqref{eq:optEqProb} as $\nu_{0.5}=1.111667$. The corresponding optimal equilibrium is \\
			$(x^e_{0.5},u^e_{0.5}) = (0.1786289, -0.1709482)$. This means that if a linear storage function $\lambda_{0.5}$ exists, then it must be of the form $\lambda_{0.5} =  1.111667 x$, and this is the storage function constructed in equation \eqref{def_lamba_mu} given that $\lambda_1$ and $\lambda_2$ are linear. For the rotated cost
			\[ \tilde\ell_{0.5}(x,u) = 2x^2 + 0.5u^2 + u + \nu_{0.5}x - \nu_{0.5}f(x,u) \]
			one computes that $\partial^2/\partial u^2 \tilde\ell_{0.5}(x^e_{0.5},u^e_{0.5}) = -0.306538$, implying that $\tilde\ell_{0.5}$ is not convex in $(x^e_{0.5},u^e_{0.5})$, which would be a necessary condition for local strict dissipativity.
		\end{example}
		
		We now proceed to a non-local version of Theorem \ref{thm:linearcorrection}, which is achieved by extending the convexity assumptions from Theorem \ref{thm:linearcorrection} to all $(x,u)\in\Y$.
		
		\begin{theorem}\label{thm:linearcorrection2}
			Assume that the system \eqref{eq: system} 
				is strictly dissipative for cost functions $\ell_1$ and $\ell_2$. Suppose that $\ell_1$ and $\ell_2$ as well as the corresponding storage functions $\lambda_1$ and $\lambda_2$ are twice continuously differentiable. Furthermore, assume that for each $\mu\in[0,1]$ the optimal equilibrium satisfies $\equm\in\rint \Y$ and that there exist $m_2(x,u,\mu)>m_1(x,u,\mu)\geq 0$ such that 
			\begin{align}
				\nabla^2_{(x,u)}\Big(\mu\tell_1(x,u)+(1-\mu)\tell_2(x,u)\Big) \geq m_2(x,u,\mu)I \label{eq: strong_convexity_tell} 
			\end{align}
			and
			\begin{align}
				\nabla^2_{(x,u)}\Big(\tilde\lambda_{\mu}^Tf(x,u)\Big) \leq m_1(x,u,\mu)I \label{eq:condition_nabla^2_f(x,u)}
			\end{align}
			for all $(x,u)\in\Y$, $\mu\in[0,1]$, where
			\begin{align}
				\tilde\lambda_{\mu} = \nu_{\mu} - \mu \nabla_x\lambda_1(\eqm) - (1-\mu)\nabla_x\lambda_2(\eqm) \in \mathbb{R}^n \label{eq:def_tilde_lambda_mu}
			\end{align}
			and $\nu_\mu$ is the Lagrange multiplicator for \eqref{eq:optEqProb}.
			Then, the system 
			is strictly dissipative for cost function $\ell_{\mu}$ for all $\mu\in[0,1]$ with storage function
			\begin{align}
				\lambda_{\mu}(x) = \mu\lambda_1(x) + (1-\mu)\lambda_2(x) + \tilde\lambda_{\mu}^Tx.  \label{eq:def_lamba_mu}
			\end{align}
		\end{theorem}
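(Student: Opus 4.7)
The plan is to mirror the proof of Theorem \ref{thm:linearcorrection} but promote the pointwise first- and second-order information at $\equm$ to a genuinely global estimate on all of $\Y$, which is exactly what the strengthened hypotheses \eqref{eq: strong_convexity_tell}--\eqref{eq:condition_nabla^2_f(x,u)} allow. First I would define $\tell_\mu$ as in \eqref{def_gamma_mu} with $\lambda_\mu$ given by \eqref{eq:def_lamba_mu}. The identity $\nabla_{(x,u)}\tell_\mu\equm = 0$ and the equation $\nabla_x\lambda_\mu(\eqm) = \nu_\mu$ follow verbatim from the corresponding computations in the proof of Theorem \ref{thm:linearcorrection}, since these steps use only the definitions of $\lambda_\mu$ and $\tilde\lambda_\mu$ together with the KKT conditions for \eqref{eq:optEqProb}, neither of which are affected by upgrading \eqref{strong_convexity_gamma}--\eqref{condition_nabla^2_f(x,u)} from a pointwise statement at $\equm$ to a pointwise statement on all of $\Y$.

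Next I would repeat the Hessian calculation from the proof of Theorem \ref{thm:linearcorrection} at an arbitrary point $(x,u)\in\Y$ rather than at the equilibrium, which is legitimate because the chain-rule identity for $\nabla^2_{(x,u)}\Lambda_\mu$ does not exploit the equilibrium property. Combining the result with \eqref{eq: strong_convexity_tell} and \eqref{eq:condition_nabla^2_f(x,u)} yields
\[
\nabla^2_{(x,u)}\tell_\mu(x,u) \;\geq\; (m_2(x,u,\mu) - m_1(x,u,\mu))\,I \;>\; 0
\]
for every $(x,u)\in\Y$ and every $\mu\in[0,1]$, so that $\tell_\mu$ is strongly convex on $\Y$ whenever $\Y$ is convex (which must be assumed, or at least star-shapedness of $\Y$ with respect to $\equm$).

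From here I would derive the desired global lower bound via a Taylor expansion along the line segment from $\equm$ to $(x,u)$. Using $\tell_\mu\equm = \ell_\mu\equm - \ell_\mu\equm + \lambda_\mu(\eqm) - \lambda_\mu(f\equm) = 0$, the vanishing of the gradient at $\equm$, and the Hessian lower bound above, the remainder term produces
\[
\tell_\mu(x,u) \;\geq\; \tfrac{1}{2}\bigl(m_2(\xi,\zeta,\mu) - m_1(\xi,\zeta,\mu)\bigr)\,\|(x-\eqm,u-u^e_\mu)\|^2
\]
for some $(\xi,\zeta)$ on the segment connecting $\equm$ and $(x,u)$. Taking $c_\mu$ to be the infimum of $m_2-m_1$ over $\Y$ (which is strictly positive if, e.g., $\Y$ is compact and $m_1,m_2$ continuous) gives an $\alpha\in\KK_\infty$ of the form $\alpha(r)=c_\mu r^2$ and hence strict $(x,u)$-dissipativity, which in particular implies strict dissipativity in the sense of Definition \ref{def: strDiss}. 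Boundedness of $\lambda_\mu$ from below on $\X$ follows from the corresponding boundedness of $\lambda_1,\lambda_2$ together with boundedness of $\tilde\lambda_\mu^T x$ on $\X$ (e.g.\ when $\Y$ and thus $\X$ is bounded).

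The main obstacle I expect is extracting a genuine $\KK_\infty$ bound uniformly on $\Y$: the pointwise assumption $m_2>m_1$ need not prevent $m_2-m_1$ from approaching $0$ along unbounded or boundary directions, so some compactness/coercivity of $\Y$ (or a uniform positive lower bound on $m_2-m_1$) is needed to conclude a quadratic $\alpha$. A secondary issue is the convexity of $\Y$ required to justify the segment-based Taylor estimate; in its absence the argument only gives the bound on the star-shaped hull of $\equm$ inside $\Y$, and additional hypotheses (mirroring the remark following Theorem \ref{thm:linearcorrection}) would be needed.
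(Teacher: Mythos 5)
Your proposal is correct and its first two steps coincide with the paper's proof: you define $\tell_\mu$ with the storage function \eqref{eq:def_lamba_mu}, verify $\nabla_{(x,u)}\tell_\mu\equm=0$ exactly as in Theorem \ref{thm:linearcorrection}, and then redo the Hessian computation at an arbitrary $(x,u)\in\Y$ to obtain $\nabla^2_{(x,u)}\tell_\mu(x,u)\geq (m_2-m_1)I>0$. Where you diverge is the final step of converting this uniform Hessian bound into the required $\KK_\infty$ lower bound. The paper argues that $\tell_\mu$ is strictly convex, continuous and positive definite with $\tell_\mu\equm=0$, hence radially unbounded, and then invokes \cite[Lemma 4.3]{Khalil2001} to produce $\alpha\in\KK_\infty$ with $\tell_\mu(x,u)\geq\alpha(\|x-\eqm\|)$. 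You instead use a second-order Taylor expansion with Lagrange remainder along the segment from $\equm$ to $(x,u)$, which gives the explicit quadratic bound $\alpha(r)=c_\mu r^2$ with $c_\mu=\tfrac12\inf_\Y(m_2-m_1)$ and in fact yields the stronger strict $(x,u)$-dissipativity. Your route is more elementary and more quantitative, but it needs $\inf_\Y(m_2-m_1)>0$ (automatic for $\Y$ compact and $m_1,m_2$ continuous), whereas the paper's convexity argument only needs pointwise strictness; both routes implicitly require $\Y$ to be convex (or star-shaped about $\equm$), a hypothesis the theorem statement omits and which you are right to flag. Your additional remark on boundedness from below of $\lambda_\mu$ on $\X$ addresses a point the paper's proof passes over silently, and is needed to distinguish strict dissipativity from mere pre-dissipativity when $\X$ is unbounded.
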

		
		\begin{proof}
			
			To show that $\equm$ is a minimiser we proceed exactly as in the proof of Theorem \ref{thm:linearcorrection}.
			In order to conclude that $\equm$ is the strict minimiser we show that the function $\tell_{\mu}$ is strictly convex by considering the second derivative for all $(x,u)\in\Y$. With the same computation as in the proof of Theorem \ref{thm:linearcorrection} for the second derivative but now for all $(x,u)$ instead of only for $\equm$, we obtain
			\begin{align*}
				\nabla^2_{(x,u)}\tell_{\mu}(x,u) &
				=\nabla_{(x,u)}^2\left(\mu\tell_1(x,u)+(1-\mu)\tell_2(x,u)\right)- \nabla_{(x,u)}^2\tilde{\lambda}_{\mu}^Tf(x,u)\\
				&\geq (m_2(x,u,\mu)-m_1(x,u,\mu))I > 0,
			\end{align*}%
			for all $(x,u)\in\Y$ and $\mu\in[0,1]$. Hence, $\tell_{\mu}$ is strictly convex and continuous and together with $\tell_{\mu}\equm=0$ positive definite. Thus, we can conclude, that $\tell_{\mu}$ is radially unbounded, see \cite{Baker2016}, and therefore, see \cite[Lemma 4.3]{Khalil2001}, that there exists a $\KK_\infty-$function $\alpha$ such that $\tell_{\mu}(x,u)\geq \alpha(\norm{x-\eqm})$. 
		\end{proof}
		
		We note that Remark \ref{rem:ext_linearcorrection} applies accordingly to Theorem \ref{thm:linearcorrection2}.
		
		\begin{remark}
			We can relax inequality \eqref{eq: strong_convexity_tell} by assuming that both $\tell_1$ and $\tell_2$ are lower bounded by functions $\bar\ell_1$ and $\bar\ell_2$ such that $\tell_i\equm = \bar\ell_i\equm$ for all $\mu\in[0,1]$. Then inequality \eqref{eq: strong_convexity_tell} only needs to hold with $\bar\ell_i$ in place of $\tell_i$, i.e., we require that $\nabla_{(x,u)}\bar\ell_{\mu}\equm = \nabla_{(x,u)}\tilde\ell_{\mu}\equm = 0$ and $\nabla^2_{(x,u)}\bar\ell\equm > 0$. Together with the fact that $\bar\ell_{\mu}$ is a lower bound for $\tilde\ell_{\mu}$ and $\tell_i\equm = \bar\ell_i\equm$ for all $\mu\in[0,1]$, this yields the desired result.
		\end{remark}
		
		In the following, we illustrate Theorem \ref{thm:linearcorrection2} with an economic example originally introduced in \cite{Brock1973} and adapted to our setting.
		\begin{example}
			Consider the nonlinear system 
			\[x^+ =x^3-2 x^2 + u,\]
			and the two convex economic stage costs 
			\begin{align*}
				\ell_1(x,u)&= \ln(5x^{0.34}-u),\\
				\ell_2(x,u)&=\ln(3x^{0.2}-u),
			\end{align*}
			which were originally given in \cite{Brock1973}. Further, we impose state and control constraint sets $\X=[0,10]$ and $\U=[0.1,5]$. For each stage cost $\ell_i$, we determine the optimal equilibrium each given by 
			\[(x^{e}_1,u^{e}_1)=(0.6214, 1.1537)\quad \text{and} \quad (x^{e}_2,u^{e}_2)=(0.2507,0.3607).\]
			The system is strictly dissipative for stage costs $\ell_i$ at their corresponding equilibria $\equi$ with storage functions
			\[\lambda_1(x)=0.3226 x\quad \text{and}\quad \lambda_2(x)= 0.5223x\]
			and rotated stage costs
			\begin{align*}
				\tell_1(x,u)&= \ln(5x^{0.34}-u)+1.1312 +0.3226 x -0.3226(x^3-2x^2+u),\\
				\tell_2(x,u)&=\ln(3x^{0.2}-u) +0.6493 +0.5223x -0.5223(x^3-2x^2+u).
			\end{align*}
			It is easy to see that all occurring functions are twice continuously differentiable and, thus, we can check numerically the first conditions of Theorem \ref{thm:linearcorrection2}. For each $\mu\in[0,1]$ the optimal equilibrium lies in the interior of $\Y$, i.e. $\equm\in\rint (\X\times\U)$. Moreover, we can set $m_2(x,u,\mu)=5.9$ by using the minimum of all calculated second derivatives of $\mu\tell_1+(1-\mu)\tell_2$ with $\mu\in[0,1]$. Next, we have to check the condition on the correction term. To this end, we calculate the Lagrange multiplier of problem \eqref{eq:optEqProb} as well as the first derivatives of the storage functions $\nabla_x \lambda_1(\eqm)$ and $\nabla_x \lambda_2(\eqm)$. Doing so, we can estimate $m_1(x,u,\mu)=-2.1699\cdot 10^{-11}\approx0$. We can therefore conclude, that the system is strictly dissipative for stage cost $\mu\ell_1 +(1-\mu)\ell_2$ at the corresponding equilibrium $\equm$ for all $\mu\in[0,1]$ with storage function 
			\[\lambda_\mu(x)=\mu \lambda_1(x)+(1-\mu)\lambda_2(x)+ \tilde\lambda_\mu^T x,\]
			with $\tilde\lambda_\mu\in (0, 0.0144)$.
		\end{example}
		
		\section{Conclusion}\label{sec:con}
		
		For optimal control problems we have investigated strict dissipativity for stage costs given by convex combinations of cost functions for which strict dissipativity holds. For linear quadratic problems, strict dissipativity for the weighted stage costs follows under mild regularity conditions. For nonlinear problems, general statements require quite restrictive assumptions, such as independence of the optimal equilibrium of the weight parameters, while less restrictive assumptions turned out to be quite technical. Nevertheless, our paper provides a set of tools that could be valuable and useful particularly in the context of multiobjective MPC.


		\bibliographystyle{siam_nolines}
		\bibliography{dissipativity_MO}
		
	\end{document}